\documentclass[11pt,letterpaper]{article}
\usepackage{paper-arxiv}
\usepackage{paper-common}
\begin{document}

\title{Dimension-independent eigenvalue approximation for sparse matrices}
\titlemark{Dimension-independent eigenvalue approximation}

\emsauthor*{1}{\givenname{Natalia}\surname{Haj\l{}asz}\mrid{}\zblid{}\orcid{}}{N.~Haj\l{}asz}
\emsauthor{2}{\givenname{Christopher}\surname{Musco}\mrid{}\zblid{}\orcid{}}{C.~Musco}
\emsauthor{3}{\givenname{Jonathan}\surname{Weare}\mrid{}\zblid{}\orcid{}}{J.~Weare}
\emsauthor{4}{\givenname{Robert J.}\surname{Webber}\mrid{}\zblid{}\orcid{}}{R.~J.~Webber}

\Emsaffil{1}{
\department{Courant Institute of Mathematical Sciences}
\organisation{New York University}
\address{251 Mercer Street}
\zip{NY 10012}
\city{New York}
\country{USA}
\affemail{nh2429@nyu.edu}}
\Emsaffil{2}{
\department{Department of Computer Science and Engineering, Tandon School of Engineering}
\organisation{New York University}
\address{370 Jay Street}
\zip{NY 11201}
\city{Brooklyn}
\country{USA}
\affemail{cmusco@nyu.edu}}
\Emsaffil{3}{
\department{Courant Institute of Mathematical Sciences}
\organisation{New York University}
\address{251 Mercer Street}
\zip{NY 10012}
\city{New York}
\country{USA}
\affemail{weare@nyu.edu}}
\Emsaffil{4}{
\department{Department of Mathematics}
\organisation{University of California San Diego}
\address{9500 Gilman Drive}
\zip{CA 92093-0112}
\city{La Jolla}
\country{USA}
\affemail{rwebber@ucsd.edu}}

\newcommand{\chris}[1]{\textcolor{blue}{{\bfseries [Chris: }{#1}{\bfseries ]}}}

\classification[65F50]{65F15}
\keywords{Eigenvalue approximation, sparse matrices, Lanczos method, truncated power method, selected configuration interaction}

\begin{abstract}
Many applications in science and engineering require solving sparse Hermitian eigenvalue problems that are so large that even algorithms whose runtime scales linearly with the matrix dimension are unacceptably slow. 
Responding to this challenge, there is a growing body of work on \emph{sparsified iterative methods} and \emph{submatrix selection methods} that often perform well in practice. 
However, theoretical understanding of these methods is limited. 

In this paper, we take a step toward better understanding dimension-independent eigenvalue computation by analyzing the sparsified power method.
We prove that the method approximates a leading eigenpair to a prescribed accuracy in a number of iterations comparable to the standard power method.
The required number of retained nonzeros depends explicitly on the target accuracy, the initial overlap with the leading eigenvector, the compressibility of the leading eigenvector, and the relative spectral gap.
When these quantities and the column sparsity are independent of dimension, the runtime is independent of dimension.

In contrast, we show that a comparable worst-case guarantee cannot hold for a prototypical sparsified submatrix selection method. 
Submatrix selection is the approach of choice in many quantum chemistry applications.
However, we show that, even with substantial initial eigenvector overlap, a single iteration of the method can produce a vector that is orthogonal to the leading eigenvector, after which the method cannot recover.
Nevertheless, experiments on matrices from the SuiteSparse matrix collection and quantum chemistry problems illustrate the practical effectiveness of submatrix selection methods despite these worst-case limitations.
\end{abstract}

\maketitle

\section{Introduction}
\label{sec:introduction}

Krylov subspace methods are the standard tools for computing the largest-magnitude eigenvalues of massive sparse matrices \cite{Saad:2011}.
For an $n\times n$ matrix $\bm{A}$ with $s$ nonzeros per column, standard implementations of these methods require $\mathcal{O}(sn)$ time per iteration and can converge quickly to outlying eigenvalues.
Yet even a linear dependence on the matrix dimension $n$ can become prohibitive for applications in network science \cite{ChingEdunovKabiljo:2015}, machine learning \cite{GhorbaniKrishnanXiao:2019}, and computational chemistry \cite{HolmesTubmanUmrigar:2016}.
These challenging applications raise the question:
\begin{quote}
\emph{Can we approximate a largest-magnitude eigenvalue and an associated eigenvector of $\bm{A}$ in time independent of the matrix dimension $n$?}
\end{quote}
Without additional structure, the answer to this question is no.
Even for a positive-semi\-definite diagonal matrix, a single large entry can be hidden among $n$ diagonal locations.
Finding this entry, and hence approximating the largest-magnitude eigenvalue, requires $\Omega(n)$ time in the worst case.

Nevertheless, in fields where the solution of massive eigenvalue problems is required, practitioners seem undeterred by such examples.
In quantum chemistry, for example, it is common to approximate the outlying eigenvalues of implicit
sparse Hermitian matrices with dimension $n=10^{25}$ or higher while accessing only a tiny fraction of the matrix entries  \cite{Li2018shci,Levine2020dasscfasci,Eriksen2020benzene, GreeneWebberSmith:2022,Christlmaier2022fciqmc}.
Two well-established approaches are sparsified iterative methods, which alternate between matrix-vector multiplication and sparsification to a small number of significant entries \cite{YuanZhang:2013,LimWeare:2017,GreeneWebberWeare:2019}, and submatrix selection methods, which adaptively select a small set of indices and diagonalize the corresponding principal submatrix \cite{HuronMalrieuRancurel:1973,Greer:1995mcci,TubmanLeeTakeshita:2016,HolmesTubmanUmrigar:2016}.
In these methods, the matrix is never formed explicitly; instead, its sparse columns are generated as needed.
However, rigorous theory for these methods is scarce, despite their practical success.
We ask: What structural factors govern their convergence or non-convergence for the Hermitian eigenvalue problem?

Our results identify three governing quantities: the overlap of the initial vector with the leading eigenvector, the relative spectral gap, and the compressibility of the leading eigenvector.
We make the following contributions:
\begin{enumerate}
\item A direct implementation of the power method can become exponentially more expensive at each step as the iterates become dense.
We show that enforcing sparsity preserves a convergence rate comparable to that of the standard power method.
We give an explicit sufficient sparsity level in terms of the target accuracy, the initial overlap, the relative spectral gap, and the compressibility of the leading eigenvector.
\item We identify a theoretical limitation for a sparsified submatrix selection method inspired by selected configuration interaction methods in quantum chemistry \cite{Greer:1995mcci,TubmanLeeTakeshita:2016,SchriberEvangelista:2016}.
For any nontrivial relative spectral gap, we construct instances in which the method loses all overlap with the leading eigenvector after a single iteration and never recovers.
For example, with an initial squared overlap of $10\%$, this failure can occur even when the relative spectral gap is as large as $0.45$.
\end{enumerate}

We assume that any requested column of $\bm{A}$ can be generated in time proportional to the number of nonzeros in the column.
Under this access model, our runtime bound for the truncated power method is independent of $n$ when the three governing quantities above, the target accuracy, the column sparsity, and the sparsity of the initial vector are fixed.
Our chemistry experiments show that sparsified submatrix selection can reach a given accuracy in fewer outer iterations than the truncated power method, yet we reveal it is vulnerable to complete loss of overlap in the worst case.

\subsection{Organization and notation}

The rest of the paper is organized as follows.
\Cref{sec:main} describes preliminary observations, main results, and related work, \cref{sec:experiments_chemistry} presents quantum chemistry experiments, \cref{sec:analysis_classical,sec:analysis_sparsified_power,sec:analysis_sparsified_submatrix} present mathematical analysis, and \cref{sec:outlook} concludes.

For a Hermitian matrix $\bm{A}$, we call an eigenvalue of maximum magnitude a leading eigenvalue and an associated eigenvector a leading eigenvector.
We order the eigenvalues by magnitude, $|\lambda_1|\geq|\lambda_2|\geq\cdots\geq|\lambda_n|$, breaking ties in magnitude arbitrarily.
We let $\bm{v}_1,\ldots,\bm{v}_n$ denote a corresponding system of orthonormal eigenvectors.
The overlap of a unit-norm vector $\bm{x}$ with $\bm{v}_1$ refers to the quantity $|\bm{v}_1^* \bm{x}|$.
The notation $\operatorname{nnz}(\bm{A})$ counts the number of nonzero entries in a matrix $\bm{A}$, and we use the same notation for vectors.

We often work with submatrices and subvectors.
The notation $\bm{A}(\mathsf{S},\mathsf{T})$ denotes the submatrix of $\bm{A}$ with rows indexed by $\mathsf{S}$ and columns indexed by $\mathsf{T}$, while $\bm{A}(\mathord{\cdot},\mathsf{T})$ denotes the submatrix containing the columns indexed by $\mathsf{T}$.
Similarly, $\bm{x}(\mathsf{T})$ denotes the restriction of $\bm{x}$ to the indices in $\mathsf{T}$.
For an index set $\mathsf{T}$, $\mathbf{I}(\mathord{\cdot},\mathsf{T})$ denotes the columns of the identity matrix indexed by $\mathsf{T}$.
Thus, $\mathbf{I}(\mathord{\cdot},\mathsf{T})\bm{y}$ embeds $\bm{y}$ into the ambient space by placing its entries at the indices in $\mathsf{T}$ and setting all other entries to zero.
Likewise, $\mathbf{I}(\mathord{\cdot},\mathsf{S})\bm{M}\mathbf{I}(\mathsf{S},\mathord{\cdot})$ embeds $\bm{M}$ as the principal submatrix indexed by $\mathsf{S}$, with zeros elsewhere.

\section{Preliminary observations, main results, and related work}
\label{sec:main}

This section offers preliminary observations on classical iterative methods, presents our main results for sparsified iterative methods, and discusses related work.

\subsection{Preliminary observations on classical iterative methods} \label{sec:prelim_results}

Our work starts with a simple observation.
If we can construct a sparse unit-norm vector $\bm{x}_0$ that has substantial overlap with a leading eigenvector, then we can estimate the magnitude of the leading eigenvalue for a sparse Hermitian matrix in dimension-independent time.
The following proposition, which is proved in \cref{sec:analysis_classical}, makes this claim precise.

\begin{proposition}[Dimension-independent convergence]
\label{prop:informal_introduction}
Suppose $\bm{A}\in \mathbb{C}^{n \times n}$ is a Hermitian matrix with a leading eigenpair $(\lambda_1, \bm{v}_1)$ and at most $s$ nonzero entries per row/column.
Assume knowledge of a unit-norm vector $\bm{x}_0 \in \mathbb{C}^{n}$ with $\bm{v}_1^* \bm{x}_0 \neq 0$.
For any $\varepsilon \in (0, 1)$, define
\begin{equation*}
    C = \operatorname{nnz}(\bm{x}_0) (s+1)^q, \qquad \text{where } q = \left\lceil\frac{1}{\sqrt{8 \varepsilon}}\log\biggl(\frac{4}{|\bm{v}_1^* \bm{x}_0|^2}\biggr)\right\rceil + 1.
\end{equation*}
Then we can compute an eigenvalue estimate satisfying $(1 - \varepsilon) |\lambda_1| \leq |\hat{\lambda}_1| \leq |\lambda_1|$ by accessing at most $C$ nonzero entries and performing $\mathcal{O}(C)$ additional arithmetic operations.
\end{proposition}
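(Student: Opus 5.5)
We will build the Krylov subspace $\mathcal{K}_q = \operatorname{span}\{\bm{x}_0, \bm{A}\bm{x}_0, \ldots, \bm{A}^{q-1}\bm{x}_0\}$ explicitly and use a Rayleigh–Ritz estimate from it, in the form of a Chebyshev-accelerated (Lanczos-type) bound. The factor $1/\sqrt{\varepsilon}$ in $q$, instead of $1/\varepsilon$, signals that Chebyshev polynomials rather than plain powers are needed.

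\textbf{Cost accounting.} The support of $\bm{A}^j\bm{x}_0$ is contained in the set of indices reachable from $\operatorname{supp}(\bm{x}_0)$ within $j$ steps in the sparsity graph of $\bm{A}$. Each step multiplies the support size by at most $s$ (or $s+1$ if we count the diagonal and the previous support). Computing $\bm{A}^{j}\bm{x}_0$ from $\bm{A}^{j-1}\bm{x}_0$ accesses at most $s\operatorname{nnz}(\bm{A}^{j-1}\bm{x}_0)$ entries. Summing the geometric series over $j \le q$ bounds the total by $\operatorname{nnz}(\bm{x}_0)(s+1)^q$. The $+1$ in $q$ absorbs the extra level needed for the products $\bm{A}\bm{K}$ in the projected matrix.

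The estimate $\hat\lambda_1$ will be an extreme-magnitude eigenvalue of the compression of $\bm{A}$ to $\mathcal{K}$. The simplest choice is to form the Gram matrix $\bm{K}^*\bm{K}$ and the matrix $\bm{K}^*\bm{A}\bm{K}$, with $\bm{K} = [\bm{x}_0,\ldots,\bm{A}^{q-1}\bm{x}_0]$. Their entries are the moments $\bm{x}_0^*\bm{A}^{k}\bm{x}_0$ for $k \le 2q-1$, and these can be obtained as inner products $(\bm{A}^i\bm{x}_0)^*(\bm{A}^j\bm{x}_0)$ with $i,j\le q$. We then solve a $q\times q$ generalized eigenproblem. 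Its cost is polynomial in $q$, which is $\mathcal{O}(C)$ since $C \ge 2^{q}$; exact arithmetic is assumed.

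\textbf{Upper bound.} The bound $|\hat\lambda_1|\le|\lambda_1|$ holds because Ritz values lie in the numerical range, which is $[\lambda_{\min},\lambda_{\max}]$.

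\textbf{Lower bound (main step).} Assume without loss of generality that $\lambda_1>0$; otherwise replace $\bm{A}$ by $-\bm{A}$, which leaves the Krylov space unchanged. Pick a polynomial $p$ of degree $q-1$ and set $\bm{y}=p(\bm{A})\bm{x}_0\in\mathcal{K}$. Then
\[
\max_{\bm{y}\in\mathcal{K}} \frac{\bm{y}^*\bm{A}\bm{y}}{\bm{y}^*\bm{y}} \ge \frac{\sum_i \lambda_i p(\lambda_i)^2 c_i^2}{\sum_i p(\lambda_i)^2 c_i^2},
\]
where $c_i = \bm{v}_i^*\bm{x}_0$. Write $\gamma = |\bm{v}_1^*\bm{x}_0|^2$. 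Eigenvalues $\lambda_i \ge (1-\varepsilon/2)\lambda_1$ only help. For all other eigenvalues, which lie in $[-\lambda_1,(1-\varepsilon/2)\lambda_1]$, take $p$ to be the Chebyshev polynomial $T_{q-1}$ mapped to that interval. Then $|p|\le1$ there, while $p(\lambda_1)=T_{q-1}(1+\delta)$ with $\delta\approx\varepsilon/2$. This gives $p(\lambda_1)\ge \tfrac12 e^{(q-1)\sqrt{2\delta}}$, roughly $\tfrac12e^{(q-1)\sqrt{\varepsilon}}$. A routine splitting of the sum shows that the Rayleigh quotient is at least $(1-\varepsilon)\lambda_1$ once the bad mass $\le 1$ is dominated by $\varepsilon\,\gamma p(\lambda_1)^2/2$ or so. The delicate part is tuning the constants: the interval endpoint, the Chebyshev growth bound $T_k(1+\delta)\ge\frac12 e^{k\sqrt{2\delta}}$, and the interplay between $\varepsilon$ and $\gamma$ inside the logarithm. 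These must together reproduce exactly $q-1=\lceil \log(4/\gamma)/\sqrt{8\varepsilon}\rceil$.

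The main obstacle I expect is this step. The stated $q$ has no $\log(1/\varepsilon)$ term, so I will compare against a shifted quantity, using $\lambda - (1-\varepsilon)\lambda_1$ and checking positivity of $\sum_i(\lambda_i-(1-\varepsilon)\lambda_1)p(\lambda_i)^2c_i^2$. This lets eigenvalues near the threshold contribute little. I may also need to place the Chebyshev interval at $[-\lambda_1,(1-\varepsilon)\lambda_1]$ and use a degree-$2(q-1)$ squared-Chebyshev argument to get the $\sqrt{8\varepsilon}$ constant.
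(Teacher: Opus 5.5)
Your cost accounting and the upper bound $|\hat\lambda_1|\le|\lambda_1|$ are fine and agree with the paper. The paper runs Lanczos to polynomial degree $q-1$, as in your $\mathcal{K}_q$, and the extra product with $\bm{A}$ needed to form the projected matrix produces the $+1$ in $q$. The gap is the main step, which you flag but leave open; the remedies you list do not close it.

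To see why, normalize $\lambda_1=1$, write $\gamma=|\bm{v}_1^*\bm{x}_0|^2$ and $k=q-1$. Take $p=T_k$ mapped from $[-1,1-\varepsilon]$ to $[-1,1]$ via $x\mapsto(2x+\varepsilon)/(2-\varepsilon)$, together with your shifted comparison.
\begin{itemize}
\item The bad sum $\sum_{\lambda_i\le1-\varepsilon}|c_i|^2(1-\varepsilon-\lambda_i)p(\lambda_i)^2$ can only be bounded by $(2-\varepsilon)(1-\gamma)$, because $|p|$ reaches $1$ at $\lambda=-1$, where the weight is $2-\varepsilon$.
\item The good term is $\varepsilon\gamma p(1)^2\approx\varepsilon\gamma\alpha^{2k}/4$, with $\alpha=(1+\sqrt{\varepsilon/2})/(1-\sqrt{\varepsilon/2})$.
\item This forces $\alpha^{2k}\gtrsim 8/(\varepsilon\gamma)$, i.e.\ $k\gtrsim\log(8/(\varepsilon\gamma))/\sqrt{8\varepsilon}$.
\end{itemize}
Moving the endpoint to $1-\varepsilon$ and using $p^2$ recovers the constant $\sqrt8$, but the $\log(1/\varepsilon)$ survives the shift. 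Separately, the growth bound $T_k(1+\delta)\ge\frac12e^{k\sqrt{2\delta}}$ is false, because $\operatorname{arccosh}(1+\delta)<\sqrt{2\delta}$; the usable statement is $\log\alpha\ge\sqrt{2\varepsilon}$.

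The missing idea is a polynomial that is allowed to be large exactly where the shifted weight $1-\varepsilon-\lambda$ is small. The paper takes $p(x)=W_k\bigl((2x+\varepsilon)/(2-\varepsilon)\bigr)$, where $W_k(\cos\theta)=\sin((k+\tfrac12)\theta)/\sin(\theta/2)$ is the Chebyshev polynomial of the fourth kind.
\begin{itemize}
\item It satisfies $(1-t)W_k(t)^2=2\sin^2((k+\tfrac12)\theta)\le2$ on $[-1,1]$. Hence $(1-\varepsilon-\lambda_i)p(\lambda_i)^2\le2-\varepsilon$ for every $\lambda_i\le1-\varepsilon$, the same bound you get with $T_k$.
\item At the other end, $W_k\bigl(\tfrac{2+\varepsilon}{2-\varepsilon}\bigr)=(\alpha^{k+1/2}-\alpha^{-k-1/2})/(\alpha^{1/2}-\alpha^{-1/2})$, and $(\alpha^{1/2}-\alpha^{-1/2})^2=4\varepsilon/(2-\varepsilon)$.
\item Therefore $\frac{\varepsilon}{2-\varepsilon}p(1)^2=\frac14(\alpha^{k+1/2}-\alpha^{-k-1/2})^2$, so the factor $\varepsilon$ cancels exactly.
\end{itemize}
With $\log\alpha\ge\sqrt{2\varepsilon}$, the condition $k\ge\log(4/\gamma)/\sqrt{8\varepsilon}$ gives $\alpha^{k+1/2}\ge2/\sqrt\gamma$. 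Hence $\varepsilon\gamma p(1)^2\ge(2-\varepsilon)(1-\gamma)$, which is exactly what the shifted comparison needs. Without this weighted (fourth-kind) Chebyshev construction, your argument proves the proposition only with $\log(4/\gamma)$ replaced by roughly $\log(8/(\varepsilon\gamma))$ in the definition of $q$.
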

Importantly, the complexity bound does not depend explicitly on the matrix dimension $n$.
For fixed $s$, $\operatorname{nnz}(\bm{x}_0)$, $\varepsilon$, and initial overlap $|\bm{v}_1^* \bm{x}_0|$, its cost is independent of $n$.
In this result, no spectral gap is required.

\Cref{prop:informal_introduction}  is not hard to prove.
If $\bm{A}$ is positive-semidefinite, we can obtain a similar result with a worse dependence on $\varepsilon$ by running the power method initialized from $\bm{x}_0$.
Assuming $|\bm{v}_1^* \bm{x}_0|$ is bounded from below independently of $n$, the power method produces the desired eigenvalue estimate in a number of steps that is independent of $n$ \cite{OLearyStewartVandergraft:1979,KuczynskiWozniakowski:1992}.
At every iteration, the number of accessed entries grows by a factor of at most $s$.
The runtime grows by the same factor, since the sparse matrix-vector product dominates the cost.

We obtain \cref{prop:informal_introduction} from a more refined argument in which we run the Lanczos method starting from $\bm{x}_0$.
We can also improve the dependence on $q$ for many natural sparsity patterns.
For banded matrices with fixed bandwidth, the number of distinct entries accessed grows only linearly with $q$, rather than exponentially.

\begin{algorithm}[t]
\caption[Submatrix selection method]{Submatrix selection method}
\label{alg:greedy}
\raggedright
{\bfseries Input}: Hermitian matrix $\bm{A} \in \mathbb{C}^{n \times n}$.
Starting index set $\mathsf{S}_0 \subseteq \{1, \ldots, n\}$.
Number of iterations \(q\).
\par {\bfseries Output}: Approximation $(\hat{\lambda}_1, \hat{\bm{v}}_1)$ to a leading eigenpair of $\bm{A}$.\par
\begin{algorithmic}[1]
\FOR{$t=1, \ldots, q$}
\STATE $\mathsf{S}_t \gets \mathsf{S}_{t-1} \cup \{k: \bm{A}(k,j) \neq 0 \text{ for some } j \in \mathsf{S}_{t-1}\}$
\ENDFOR
\STATE {\bfseries Return}: $(\hat{\lambda}_1, \hat{\bm{v}}_1) \gets$ a leading eigenpair of $\mathbf{I}(\cdot, \mathsf{S}_q) \bm{A}(\mathsf{S}_q, \mathsf{S}_q) \mathbf{I}(\mathsf{S}_q, \cdot)$.
\end{algorithmic}
\end{algorithm}

The power and Lanczos methods use the accessed matrix entries to evaluate a sequence of matrix-vector products. 
In contrast, methods for large eigenvalue problems in quantum chemistry use these entries more exhaustively, optimizing the eigenvector approximation over an adaptively chosen set of indices \cite{HuronMalrieuRancurel:1973,Greer:1995mcci,TubmanLeeTakeshita:2016, SchriberEvangelista:2016,HolmesTubmanUmrigar:2016,GarnironApplencourtGasperich:2019}. As a prototypical example of this approach, the submatrix selection method in \cref{alg:greedy} iteratively grows a set of indices $\mathsf{S}_q$ connected by nonzero entries of $\bm{A}$ and diagonalizes the entire selected principal submatrix.
When $\mathsf{S}_0 = \operatorname{supp}(\bm{x}_0)$, the approximation space for submatrix selection contains the Krylov subspace
\begin{equation*}
    \operatorname{span}\{\bm{x}_0,\bm{A} \bm{x}_0,\ldots,\bm{A}^q\bm{x}_0\}.
\end{equation*}
Consequently, a leading eigenvalue of $\bm{A}(\mathsf{S}_q,\mathsf{S}_q)$ is at least as accurate in approximating the leading eigenvalue magnitude as the Lanczos estimate obtained from this Krylov subspace.
Our experiments show that this additional computation can substantially reduce the number of distinct matrix entries accessed before reaching a given accuracy; see \cref{fig:entries_viewed}.

We observe that classical iterative methods and the submatrix selection method rely on a common assumption: we can construct a sparse unit-norm starting vector that has dimension-independent overlap with a leading eigenvector.
This is impossible in some cases.
For example, when all the entries of $\bm{v}_1$ have magnitude $1/\sqrt{n}$, every unit-norm vector $\bm{x}_0$ satisfies
\begin{equation*}
    |\langle \bm{v}_1, \bm{x}_0\rangle| \leq \sqrt{\operatorname{nnz}(\bm{x}_0)/n}.
\end{equation*}
Thus, any initial vector with a bounded number of nonzeros approaches orthogonality as $n$ grows.
Even in applications where $\bm{v}_1$ is approximately sparse, finding a good $\bm{x}_0$ involves identifying large-magnitude entries of $\bm{v}_1$, which seems hard to do in the worst case.

Nevertheless, a good sparse starting vector is often available in practice.
For example, in the full configuration interaction problem in quantum chemistry \cite{szabo2012modern}, the Hartree--Fock determinant, which we take to be the first basis vector, often has substantial overlap with the leading eigenvector.
The squared overlaps are approximately $0.96$ and $0.74$ for the neon and ozone applications in \cref{sec:experiments_chemistry}.
In such cases, we can simply choose $\bm{x}_0 = \bm{e}_1$.

In other applications, we may know the index $i$ associated with a largest-magnitude diagonal entry.
\Cref{fig:entries_viewed} shows that the simple initialization $\bm{x}_0 = \bm{e}_i$ works surprisingly well, surprisingly often.
For this figure, we analyzed every non-diagonal symmetric matrix in the SuiteSparse collection \cite{KolodziejAznavehBullock:2019,10.1145/2049662.2049663} with dimension $n < 1.4$ million.
We ran the submatrix selection method in \cref{alg:greedy} until achieving relative eigenvalue error $|\hat{\lambda}_1 - \lambda_1|/|\lambda_1| \leq 10^{-5}$ and recorded the percentage of distinct nonzero entries read before convergence.
When an algorithm stalled before reaching this accuracy, we counted the percentage of entries read as $100\%$.
For $21\%$ of the matrices, the method read less than $1\%$ of the nonzero entries, and for $17\%$ it read less than $0.1\%$.
These percentages exclude the cost of finding the initial index $i$.
We also ran Lanczos and the power method from the same initialization.
These methods generally required access to a larger portion of the matrix, sometimes dramatically so, as illustrated by the examples in the right panels.
The substantially reduced access requirements support why submatrix selection methods are so popular in practice.

\begin{figure}[t]
\centering
\includegraphics[width=1\textwidth]{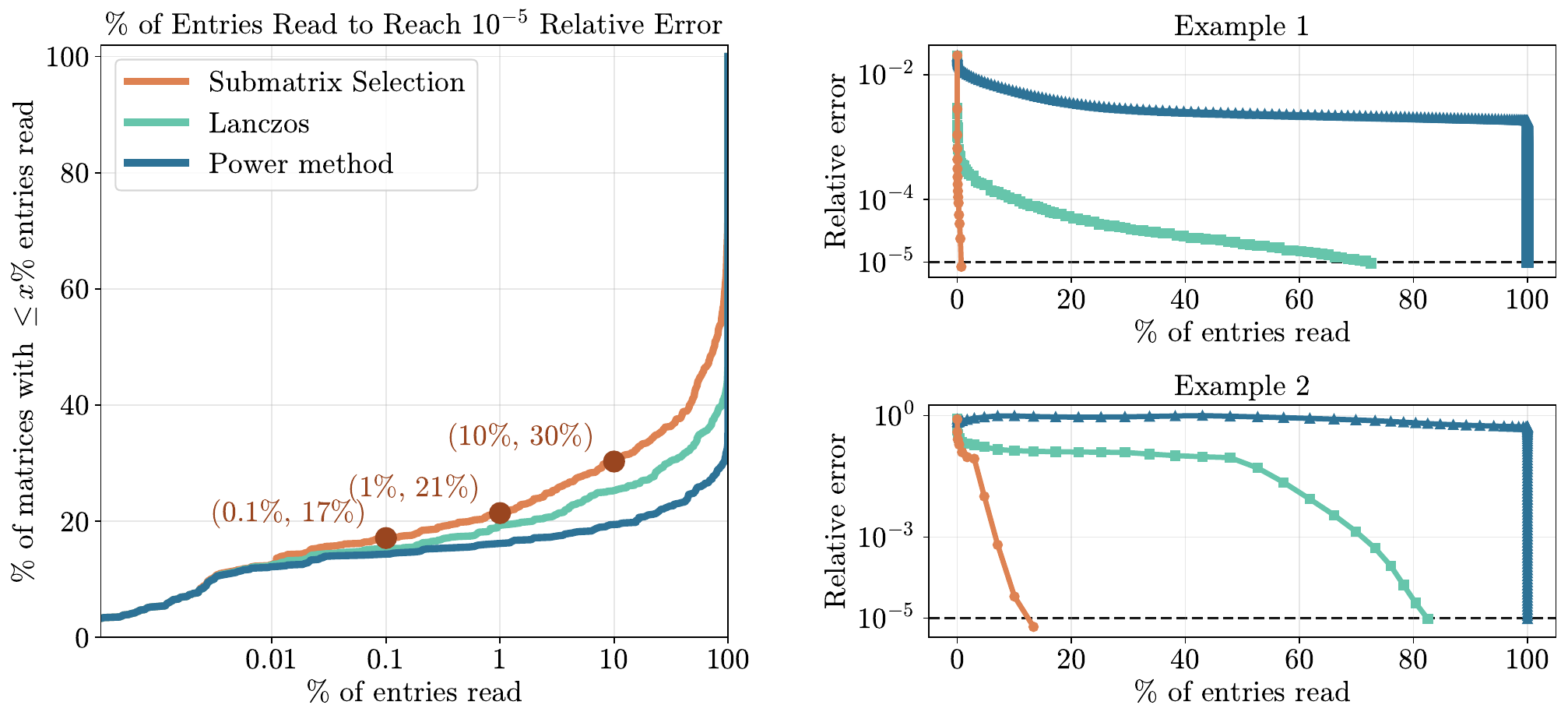}
\caption{ Left panel: Percentage of distinct nonzero entries read before reaching $10^{-5}$ relative eigenvalue error.
A point $(x,y)$ on a curve indicates that the algorithm reads at most $x$ percent of the nonzero entries for $y$ percent of the matrices in SuiteSparse. 
Right panels: Relative error versus percentage of distinct nonzero entries read for two example matrices with SuiteSparse IDs 1412 and 1230.
}
\label{fig:entries_viewed}
\end{figure}

\subsection{Main results for sparsified iterative methods}\label{sec:main_results}

Our preliminary observations suggest that it is often reasonable to assume access to a sparse starting vector that overlaps a leading eigenvector.
However, even with such a starting vector, the classical iterative methods have a common limitation: the number of active coordinates can grow exponentially with the number of iterations, even if it is independent of $n$.
Therefore, we turn to methods that are popular in quantum chemistry, but are less understood rigorously.

State-of-the-art methods control the growth in the number of active coordinates through selective expansion, repeated sparsification, or both.
Selective-expansion methods add only important indices connected to the current support \cite{HuronMalrieuRancurel:1973,HolmesTubmanUmrigar:2016,HernandezVanBeeumenCaprioYang:2021,Wang2019cdfci,Wang2023xcdfci}.
Building on this approach, several submatrix selection methods repeatedly expand the active set, diagonalize the corresponding principal submatrix, and prune the active set to retain only the most significant indices in the approximate top eigenvector obtained \cite{Greer:1995mcci,TubmanLeeTakeshita:2016,SchriberEvangelista:2016}.
Finally, several methods including full configuration interaction quantum Monte Carlo \cite{Boothetal2009fciqmc,Clelandetal2010fciqmc,LuWang:2020} and fast randomized iteration \cite{LimWeare:2017,GreeneWebberWeare:2019, GreeneWebberWeare:2020,GreeneWebberBerkelbach:2022, GreeneWebberSmith:2022} can be interpreted as inexact power methods that randomly expand and contract an active set of indices.
Motivated by these state-of-the-art methods, we ask,
\begin{quote}
\emph{Which dimension-independent convergence guarantees survive repeated sparsification?}
\end{quote}
We first analyze the sparsified power method and obtain a meaningful worst-case convergence guarantee (\cref{sec:sparsified_power}).
We then turn to sparsified submatrix selection and show that it fails dramatically in the worst case (\cref{sec:sparsified_bag_growing}).

\subsubsection{Sparsified power method} \label{sec:sparsified_power}

The classical power method can be sparsified using deterministic truncation of the smallest-magnitude entries \cite{YuanZhang:2013} or random unbiased sparsification \cite{LimWeare:2017}.
Here we focus on the deterministic truncation approach,
\begin{equation}
    \label{eq:deterministic}
    \bm{x}_t = \frac{\bm{\psi}_m(\bm{A} \bm{x}_{t-1})}{\lVert \bm{\psi}_m(\bm{A} \bm{x}_{t-1}) \rVert},
\end{equation}
where $\bm{\psi}_m(\bm{y})$ keeps the $m$ largest-magnitude elements of $\bm{y}$ and sets the rest equal to zero; ties are broken arbitrarily, and $\bm{\psi}_m$ keeps all elements if $m\geq n$.
We establish the following worst-case convergence guarantee, with the proof appearing in \cref{sec:analysis_sparsified_power}.

\begin{theorem}[Sparsified power method convergence]
\label{thm:main_thm}
Suppose $\bm{A} \in \mathbb{C}^{n \times n}$ is a Hermitian matrix with a leading eigenpair $(\lambda_1, \bm{v}_1)$ that has a second-largest-magnitude eigenvalue satisfying $|\lambda_2| < |\lambda_1|$.
Let $\bm{x}_0 \in \mathbb{C}^n$ be a unit-norm starting vector with $\bm{v}_1^* \bm{x}_0 \neq 0$ and $\operatorname{nnz}(\bm{x}_0) \leq m$, and fix a target accuracy $\varepsilon \in (0, |\bm{v}_1^*\bm{x}_0|^2)$.
Let $k$ be the smallest integer satisfying
\begin{equation*}
    \min_{\operatorname{nnz}(\bm{w}) \leq k} \lVert \bm{v}_1 - \bm{w} \rVert^2 \leq \frac{(1 - |\lambda_2 / \lambda_1|)\,\varepsilon}{10}.
\end{equation*}
Run the sparsified power method \cref{eq:deterministic} with
\begin{equation*}
    m \geq \frac{10k}{(1 - |\lambda_2 / \lambda_1|)^2\, |\bm{v}_1^* \bm{x}_0|^2} \quad \text{nonzeros},
\end{equation*}
starting from $\bm{x}_0$.
When the number of iterations is
\begin{equation*}
    q \geq \frac{5}{2(1 - |\lambda_2/\lambda_1|)} \log\left(\frac{2}{\varepsilon |\bm{v}_1^* \bm{x}_0|^2}\right),
\end{equation*}
the method returns an estimated eigenpair $(\hat{\lambda}_1, \hat{\bm{v}}_1) = (\bm{x}_q^* \bm{A} \bm{x}_q, \bm{x}_q)$ that satisfies
\begin{equation}
    \label{eq:the_guarantee}
    |\bm{v}_1^* \hat{\bm{v}}_1|^2 \geq 1 - \varepsilon/2 \quad \text{and therefore} \quad |\lambda_1 - \hat{\lambda}_1| \leq \varepsilon|\lambda_1|.
\end{equation}
\end{theorem}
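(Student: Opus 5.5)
We track the tangent-type quantity $\tau_t = \lVert (\mathbf{I} - \bm{v}_1\bm{v}_1^*)\bm{x}_t\rVert^2 / |\bm{v}_1^*\bm{x}_t|^2$, or equivalently the squared overlap $c_t = |\bm{v}_1^*\bm{x}_t|^2$. We aim for a one-step recursion showing that truncation degrades progress by at most a controlled additive amount. Write $\gamma = |\lambda_2/\lambda_1|$ and $\bm{y} = \bm{A}\bm{x}_{t-1}/|\lambda_1|$, and decompose $\bm{y} = \alpha \bm{v}_1 + \bm{r}$ with $|\alpha| = |\bm{v}_1^*\bm{x}_{t-1}|$ and $\lVert\bm{r}\rVert \le \gamma \lVert (\mathbf{I}-\bm{v}_1\bm{v}_1^*)\bm{x}_{t-1}\rVert$. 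Without truncation this gives the classical contraction $\tau_t \le \gamma^2 \tau_{t-1}$.

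The key step is a perturbation lemma for $\bm{\psi}_m$. Let $\mathsf{K}$ be the support of a best $k$-term approximation $\bm{w}$ of $\bm{v}_1$, so $\lVert \bm{v}_1(\mathsf{K}^c)\rVert^2 \le \delta := (1-\gamma)\varepsilon/10$. We bound the component of $\bm{y}-\bm{\psi}_m(\bm{y})$ along $\bm{v}_1$ and its norm, using two facts:
\begin{itemize}
\item[(i)] The entries discarded by $\bm{\psi}_m$ each have magnitude at most $\lVert \bm{y}\rVert/\sqrt{m}$, since $m$ kept entries each dominate them.
\item[(ii)] $\bm{v}_1^*(\bm{y}-\bm{\psi}_m(\bm{y})) = \sum_{i\notin \operatorname{supp}\bm{\psi}_m(\bm{y})} \overline{\bm{v}_1(i)}\bm{y}(i)$. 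Split this sum over $\mathsf{K}$ and $\mathsf{K}^c$. On $\mathsf{K}$ there are at most $k$ indices, each $|\bm{y}(i)|\le \lVert\bm{y}\rVert/\sqrt{m}$, giving a contribution at most $\sqrt{k/m}\,\lVert\bm{y}\rVert$. On $\mathsf{K}^c$, Cauchy--Schwarz gives at most $\sqrt{\delta}\,\lVert \bm{y}-\bm{\psi}_m(\bm{y})\rVert$.
\end{itemize}
Since $\bm{\psi}_m(\bm{y})$ is an orthogonal projection of $\bm{y}$ onto a coordinate subspace, $\lVert\bm{\psi}_m(\bm{y})\rVert^2 = \lVert\bm{y}\rVert^2 - \lVert \bm{y}-\bm{\psi}_m(\bm{y})\rVert^2$. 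The overlap of the normalized truncated vector is therefore at least
\[
\frac{|\alpha| - \sqrt{k/m}\,\lVert\bm{y}\rVert - \sqrt{\delta}\,\lVert\bm{y}\rVert}{\lVert\bm{y}\rVert},
\]
and truncation only shrinks the denominator.

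This yields a recursion of the form
\[
\sqrt{c_t} \;\ge\; \frac{\sqrt{c_{t-1}}}{\sqrt{c_{t-1}+\gamma^2(1-c_{t-1})}} \;-\; \sqrt{k/m} \;-\; \sqrt{\delta}.
\]
Here $\sqrt{k/m} \le (1-\gamma)|\bm{v}_1^*\bm{x}_0|/\sqrt{10}$ and $\sqrt{\delta}$ is small relative to $(1-\gamma)$. We then argue in two phases:
\begin{enumerate}
\item While $c_{t-1}$ is bounded away from $1$, the untruncated gain in $\sqrt{c}$ is a multiplicative factor of at least about $1+(1-\gamma)(1-c)/2$. This dominates the losses because the losses scale like $(1-\gamma)\sqrt{c_0}$. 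So by induction $c_t \ge c_0$ and it grows geometrically, reaching a constant level in about $\log(1/c_0)/(1-\gamma)$ steps.
\item Near $1$, work with $1-c_t$ instead. The contraction $1-c_t \lesssim \gamma'(1-c_{t-1}) + O(\text{loss})$, with $\gamma' = 1-(1-\gamma)/\text{const}$, drives $1-c_t$ down to a fixed point of order $(k/m + \delta)/(1-\gamma)^2$-ish. The choices of $m$ and $\delta$ make this fixed point at most $\varepsilon/2$, using $\varepsilon < c_0$ to absorb $k/m \le (1-\gamma)^2 c_0/10$.
\end{enumerate}
Counting the iterations of both phases yields the $\frac{5}{2(1-\gamma)}\log\frac{2}{\varepsilon c_0}$ bound.

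For the near-$1$ phase, the $\mathsf{K}$-part of the loss must be bounded more sharply. The part of $\bm{y}$ on $\mathsf{K}$ that gets discarded is small compared to $\lVert \bm{r}\rVert$, since large entries of $\alpha\bm{v}_1$ outside the kept set must be matched by $\bm{r}$. So the loss there should be measured in terms of $\sqrt{k/m}\,\lVert\bm{y}\rVert$ times $\sqrt{1-c}$, or the additive error should be accepted and compared with the target directly. Making the constants close up (the $10$'s and $5/2$) is the main obstacle; I would first try to prove $1-c_t \le \gamma_*(1-c_{t-1}) + 2(k/m)/c_0\cdot(\dots) + 2\delta$ with $\gamma_* = 1-\tfrac{2}{5}(1-\gamma)$, which matches the $5/2$ in $q$.

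Finally, the eigenvalue bound follows from the standard Rayleigh quotient identity. Writing $\hat{\bm{v}}_1 = \sqrt{c}\,\bm{v}_1 + \bm{z}$ with $\lVert\bm{z}\rVert^2 = 1-c$, we get $|\lambda_1-\hat{\lambda}_1| = |\sum_{i\ge2}(\lambda_1-\lambda_i)|\bm{v}_i^*\hat{\bm{v}}_1|^2| \le 2|\lambda_1|(1-c) \le \varepsilon|\lambda_1|$.
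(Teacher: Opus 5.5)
\textbf{There is a genuine gap, and it lies in your truncation lemma rather than in the constants.}

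Your bound on each discarded entry, $\lVert\bm{y}\rVert/\sqrt{m}$, makes the $\mathsf{K}$-part of the loss an additive $\sqrt{k/m}$ in the overlap amplitude. That loss does not depend on how close the iterate already is to $\bm{v}_1$. The theorem's $m$ does not scale with $\varepsilon$, so $\sqrt{k/m}$ may be as large as $(1-\gamma)\sqrt{c_0/10}$. Near $c=1$ your recursion reads, to leading order, $1-c_t\le\gamma^2(1-c_{t-1})+2\sqrt{k/m}+2\sqrt{\delta}$. At the smallest allowed $m$ it therefore certifies $1-c_t$ only down to about $2\sqrt{k/m}/(1-\gamma^2)\ge\sqrt{c_0/10}$, not $\varepsilon/2$.

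Your phase-2 fixed point $(k/m+\delta)/(1-\gamma)^2$ does not follow from this recursion, since the loss in $c$ is linear in $\sqrt{k/m}$, not in $k/m$. Even that quantity is of order $c_0$. Invoking $\varepsilon<c_0$ to absorb $k/m\le(1-\gamma)^2c_0/10$ runs the wrong way: any error-independent loss from the $\mathsf{K}$-part would force $m$ to grow with $1/\varepsilon$, which the theorem does not allow.

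The $\mathsf{K}^c$ term is also too lossy as written. You replace $\lVert\bm{y}-\bm{\psi}_m(\bm{y})\rVert$ by $\lVert\bm{y}\rVert$ and discard the shrunken denominator. This costs about $2\sqrt{\delta}$ in squared overlap per step, which for small $\varepsilon$ exceeds the gain of order $(1-\gamma)\varepsilon$ available near the target. This part is repairable: keeping both, $(\sqrt{c}-\sqrt{\delta}\,u)^2/(1-u^2)\ge c-\delta$ for every $u\in[0,1)$.

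\textbf{What is missing is a truncation loss that is multiplicative in the current error; the paper's Step 1 supplies it.} The precise form of your closing remark is to compare the discarded entries of $\mathsf{K}$ with the kept entries \emph{outside} $\mathsf{K}$, rather than with $\lVert\bm{y}\rVert/\sqrt{m}$. With $\mathsf{S}$ the kept set, $\lVert\bm{y}(\mathsf{K}\setminus\mathsf{S})\rVert^2\le(k/m)\lVert\bm{y}(\mathsf{S}\setminus\mathsf{K})\rVert^2$. A unit-norm $\bm{y}$ lies within $\sqrt{1-c}$ of $|\bm{v}_1^*\bm{y}|\bm{v}_1$, and $\bm{v}_1$ has norm $\Delta_k$ off $\mathsf{K}$. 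Hence $\lVert\bm{y}(\mathsf{S}\setminus\mathsf{K})\rVert\le\sqrt{1-c}+\sqrt{c}\,\Delta_k$, where $c=|\bm{v}_1^*\bm{y}|^2$.

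Now expand $\lVert\bm{v}_1-|\bm{v}_1^*\bm{y}|\bm{\psi}_m(\bm{y})\rVert^2$ with $\bm{w}=\bm{y}-\bm{\psi}_m(\bm{y})$. Completing the square on $\mathsf{K}^c$ caps that contribution at $\Delta_k^2$. Cauchy--Schwarz on $\mathsf{K}$, together with the bound above, gives a post-truncation squared overlap of at least $c-(2\eta\sqrt{c}+5\eta^2c)(1-c)-\tfrac{5}{3}\Delta_k^2$, with $\eta=\sqrt{k/m}$.

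\textbf{How the iteration then closes.}
\begin{itemize}
\item The $\eta$-terms carry a factor of the current error, so they can be absorbed into the power-step gain. This leaves a net gain of at least $(1-\gamma)c_t(1-c_t)$, provided $5\eta^2\le(1-\gamma)^2|\bm{v}_1^*\bm{x}_t|^2$.
\item That proviso is exactly where $m\ge10k/((1-\gamma)^2c_0)$ and the invariant $c_t\ge\min\{c_0,1-\varepsilon/2\}\ge c_0/2$ enter.
\item The leftover $\tfrac{5}{3}\Delta_k^2\le(1-\gamma)\varepsilon/6$ is at most half that gain whenever $\varepsilon<c_t<1-\varepsilon/2$. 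This yields the single recursion $c_{t+1}\ge c_t+\tfrac{1-\gamma}{2}c_t(1-c_t)$ below the target, and preservation of the target once it is reached.
\item The odds $c_t/(1-c_t)$ then grow by the factor $1+\tfrac{1-\gamma}{2}$ per step. Using $\log(1+s)\ge2s/(2+s)$ gives the $\tfrac{5}{2(1-\gamma)}\log\tfrac{2}{\varepsilon c_0}$ count directly, with no separate near-$1$ phase.
\end{itemize}
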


\Cref{thm:main_thm} establishes dimension-independent convergence for the sparsified power method, while placing a hard cap on the per-iteration runtime.
Let $s$ be an upper bound on the number of nonzero entries per row/column of the target matrix.
Each iteration queries at most $m$ sparse columns and returns at most $s m$ nonzero matrix entries.
The method requires $\mathcal{O}(qsm)$ time to execute $q$ iterations, which is independent of $n$ when $q$, $s$, and $m$ are fixed.

Sparsification preserves the convergence guarantee for the classical power method up to an absolute constant factor.
By \cref{prop:gapped_power}, the classical power method achieves the $\varepsilon$-accuracy guarantee \cref{eq:the_guarantee} when the number of iterations is
\begin{equation*}
    q \geq \frac{1}{2(1 - |\lambda_2/\lambda_1|)}\log\biggl(\frac{2}{\varepsilon\vert \bm{v}_1^\ast \bm{x}_0\vert^2}\biggr).
\end{equation*}
Our bound for the sparsified power method is just five times higher, and the constant five can potentially be reduced through a sharper convergence analysis.

\Cref{thm:main_thm} establishes an eigenvector approximation guarantee $|\bm{v}_1^* \hat{\bm{v}}_1|^2 \geq 1 - \varepsilon/2$, which is equivalent to
\begin{equation*}
    \lVert \bm{v}_1 - (\hat{\bm{v}}_1^* \bm{v}_1)\hat{\bm{v}}_1 \rVert^2 \leq \frac{\varepsilon}{2}.
\end{equation*}
Such a guarantee is only possible when the leading eigenvector $\bm{v}_1$ is well-approximated by an $m$-sparse vector with squared approximation error no larger than $\varepsilon/2$.
However, \cref{thm:main_thm} makes a more stringent requirement: it requires a squared approximation error of order $(1 - |\lambda_2/\lambda_1|) \varepsilon$ using $k$ nonzeros and it requires an oversampling factor $m/k$ of order $(1 - |\lambda_2/\lambda_1|)^{-2} |\bm{v}_1^* \bm{x}_0|^{-2}$.
While constants can certainly be improved, we believe multiplicative factors of these orders are necessary in a worst-case analysis.

To the best of our knowledge, \cref{thm:main_thm} gives the first worst-case convergence guarantee for the deterministically truncated power method applied to a general Hermitian matrix.
Yuan and Zhang \cite{YuanZhang:2013} obtain related recovery guarantees when $\bm{A}$ is a small restricted-norm perturbation of a symmetric matrix with an exactly sparse leading eigenvector.
Earlier papers \cite{ShenHuang:2008,JourneeNesterovRichtarik:2010} studied closely related thresholded power iterations for sparse principal component analysis, but did not obtain approximation guarantees for a leading eigenpair of a general Hermitian matrix.

\subsubsection{Sparsified submatrix selection methods}
\label{sec:sparsified_bag_growing}

Since the power method retains a convergence guarantee under deterministic sparsification, it is natural to ask whether the submatrix selection method does so as well.
One seemingly benign way to sparsify \Cref{alg:greedy} is to restrict the index set $\mathsf{S}_t$ based on the sparsity pattern of the current iterate $\bm{x}_t$.
We set
\begin{equation}
    \label{eq:recursion_1}
    \hat{\mathsf{S}}_t = \{i\ :\ \bm{x}_t(i)\neq 0\}.
\end{equation}
Then, as usual, expand the indices according to
\begin{equation}
    \label{eq:recursion_2}
    \mathsf{S}_{t+1} = \hat{\mathsf{S}}_t \cup \{k : \bm{A}(k,j)\neq 0\text{ for some } j \in \hat{\mathsf{S}}_t\}
\end{equation}
and define the next iterate $\bm{x}_{t+1}$ to be a leading eigenvector of $\mathbf{I}(\cdot, \mathsf{S}_{t+1}) \bm{A}(\mathsf{S}_{t+1}, \mathsf{S}_{t+1}) \mathbf{I}(\mathsf{S}_{t+1}, \cdot)$.
This sparsification rule removes only indices whose coefficients are exactly zero and is therefore substantially less aggressive than practical methods.

The sparsification rule preserves an important stability property.
By the variational principle for Hermitian eigenvalues, each iterate $\bm{x}_{t+1}$ leads to the largest possible value of $|\bm{x}^* \bm{A} \bm{x}|$ over unit-norm vectors $\bm{x}$ supported on $\mathsf{S}_{t+1}$.
The variational principle then implies
\begin{equation*}
    \left| \bm{x}_{t+1}^* \bm{A} \bm{x}_{t+1} \right| \geq \left| \bm{x}_t^* \bm{A} \bm{x}_t \right|,
\end{equation*}
because $\bm{x}_t$ is supported on $\mathsf{S}_{t+1}$.
The method generates a non-decreasing sequence of estimates for the largest eigenvalue magnitude.

Nevertheless, when the initial squared overlap lies below a threshold determined by the relative spectral gap, the method can lose all overlap with the leading eigenvector after a single iteration.
We present the following result, whose proof appears in \cref{sec:analysis_sparsified_submatrix}.

\begin{proposition}[Losing all overlap] \label{prop:greedy_negative_thm}
For any relative eigenvalue gap $r \in (0, 1)$ and for every $\delta$ satisfying
\begin{equation*}
    0 < \delta < \frac{2(1-r)^3}{(1+r)^2(2-r)},
\end{equation*}
there exists a symmetric tridiagonal matrix $\bm{A} \in \mathbb{R}^{4 \times 4}$ and unit-norm initialization $\bm{x}_0 \in \mathbb{R}^4$ with the following properties.
\begin{enumerate}
\item \textbf{Prescribed eigenvalue ratio.} The largest-magnitude eigenvalues are
    \begin{equation*}
    \lambda_1 = 1, \qquad \lambda_2 = 1 - r.
\end{equation*}
\item \textbf{Sparse leading eigenvector.} The leading eigenvector $\bm{v}_1$ has support $\{1, 2, 3\}$.
\item \textbf{Prescribed initial overlap.} The initial iterate has nontrivial overlap with the target eigenvector,
    \begin{equation*}
    |\bm{v}_1^* \bm{x}_0|^2 = \delta.
\end{equation*}
\item \textbf{Zero overlap of subsequent iterates.} When we apply the sparsified submatrix method \eqref{eq:recursion_1}--\eqref{eq:recursion_2}, it produces iterates that have zero overlap with the target eigenvector,
    \begin{equation*}
    \bm{v}_1^* \bm{x}_t = 0, \qquad t = 1, 2, \ldots
\end{equation*}
\end{enumerate}
\end{proposition}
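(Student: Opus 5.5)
The plan is to make the leading eigenvector unreachable because of a \emph{fixed point} of the iteration \eqref{eq:recursion_1}--\eqref{eq:recursion_2}, rather than trying to make the method drift away gradually. Two observations drive the construction.

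First, any principal submatrix that contains $\operatorname{supp}(\bm{v}_1)=\{1,2,3\}$ has $\lambda_1=1$ as its unique largest-magnitude eigenvalue. Indeed, $\bm{v}_1$ restricted to that submatrix is still an eigenvector with eigenvalue $1$. By Cauchy interlacing, the submatrix's second eigenvalue is at most $\lambda_2<1$ and its smallest eigenvalue is at least $\lambda_{\min}\ge -(1-r)$. Hence every such selection recovers $\bm{v}_1$ exactly, and the index set must never contain $\{1,2,3\}$.

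Second, a natural candidate index set is $\mathsf{S}=\{2,3,4\}$. The method stays inside $\mathsf{S}$ forever only if the leading eigenvector $\bm{x}$ of the block on $\mathsf{S}$ satisfies $\bm{x}(2)=0$; otherwise the neighbor $1$ is added. If in addition $\bm{x}\perp\bm{v}_1$ with $\bm{v}_1(4)=0$ and $\bm{v}_1(3)\neq 0$, then $\bm{x}(3)=0$ as well, so $\bm{x}=\bm{e}_4$.

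I therefore take $\bm{A}=\bm{T}\oplus(1-r)$, where $\bm{T}$ is a $3\times 3$ symmetric tridiagonal matrix. I require $\bm{T}$ to have top eigenvalue $1$, other eigenvalues in $(-(1-r),1-r)$, and an eigenvector $\bm{v}_1$ with full support $\{1,2,3\}$. Then $\lambda_1=1$ and $\lambda_2=1-r$ (the isolated entry), which gives properties 1 and 2.

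Take $\bm{x}_0=\alpha\bm{e}_3+\beta\bm{e}_4$ with $\alpha\neq0$ and $\alpha^2+\beta^2=1$. Then $\hat{\mathsf{S}}_0=\{3,4\}$ or $\{3\}$, and by tridiagonality with $\bm{A}(3,4)=0$ we get $\mathsf{S}_1=\{2,3,4\}$. The block $\bm{A}(\mathsf{S}_1,\mathsf{S}_1)$ equals $\bm{T}(\{2,3\},\{2,3\})\oplus(1-r)$. I will require the eigenvalues of the $2\times2$ block $\bm{T}(\{2,3\},\{2,3\})$ to lie strictly in $(-(1-r),1-r)$, so that $\bm{x}_1=\bm{e}_4$ is the unique leading eigenvector. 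Then $\hat{\mathsf{S}}_1=\{4\}$ and $\mathsf{S}_2=\{4\}$, since row $4$ has no off-diagonal nonzeros. From there $\bm{x}_t=\bm{e}_4$ for all $t\ge1$, and $\bm{v}_1^*\bm{e}_4=0$, which is property 4. The overlap is $|\bm{v}_1^*\bm{x}_0|^2=\alpha^2\bm{v}_1(3)^2$. Choosing $\alpha$ realizes every $\delta\in(0,\bm{v}_1(3)^2]$, which is property 3.

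The main obstacle is the extremal problem. I need to show that for every $\delta<\frac{2(1-r)^3}{(1+r)^2(2-r)}$ some tridiagonal $\bm{T}$ satisfies the constraints with $\bm{v}_1(3)^2\ge\delta$, ideally with this threshold as the supremum. My first attempt is to parametrize $\bm{T}$ by diagonal $(a,b,c)$ and off-diagonals $(e,f)$. I would impose $\bm{T}\bm{v}_1=\bm{v}_1$ with $\bm{v}_1\propto(v_a,v_b,v_c)$, which lets me solve for $a$ and $e$ in terms of the rest. Then I would maximize $v_c^2$ subject to the $2\times2$ block $\begin{pmatrix} b&f\\ f&c\end{pmatrix}$ having spectral radius below $1-r$, and the remaining two eigenvalues of $\bm{T}$ lying in $(-(1-r),1-r)$.

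A natural ansatz puts the $2\times2$ block at the boundary, for instance with eigenvalues $\pm(1-r)$ or both equal to $1-r$ in the limit. It also lets the two non-leading eigenvalues of $\bm{T}$ approach $\pm(1-r)$, using interlacing between $\bm{T}$ and that block. I would then use the identity $\bm{v}_1(3)^2=\prod_j(1-\nu_j)/\prod_{j\ge2}(1-\mu_j)$ for the leading eigenvector's last-coordinate weight of a Jacobi matrix. Here $\nu_j$ are the eigenvalues of the $\{1,2\}$ block and $\mu_j$ those of $\bm{T}$. Optimizing over the admissible $\nu_j,\mu_j$ should reduce to a one-variable problem whose optimum is $\frac{2(1-r)^3}{(1+r)^2(2-r)}$. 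Taking strict inequalities slightly inside the boundary then handles every smaller $\delta$.
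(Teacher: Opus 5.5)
Your reduction is correct and is the mirror image of the paper's construction (reverse the coordinates $1,2,3$). The paper starts on $\{1,4\}$, gets $\mathsf{S}_1=\{1,2,4\}$, and needs the $\{1,2\}$ block to have norm below $1-r$; you start on $\{3,4\}$ and need the same of the $\{2,3\}$ block. One small slip: you need $\beta\neq 0$. If $\beta=0$, then $\mathsf{S}_1=\{2,3\}$ and the next expansion recovers $\bm{v}_1$. So only $\delta<\bm{v}_1(3)^2$ is attainable, not $\delta=\bm{v}_1(3)^2$. This is harmless because the admissible range of $\delta$ is open.

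The genuine gap is the step you call the main obstacle, which is the actual content of the proposition. You never exhibit a $\bm{T}$ with $\bm{v}_1(3)^2>\delta$ for $\delta$ close to $2(1-r)^3/[(1+r)^2(2-r)]$, and the route you sketch would not produce one, for two reasons.

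\emph{The identity does not see the constraint.} $\bm{v}_1(3)^2=\prod_j(1-\nu_j)/\prod_{j\ge2}(1-\mu_j)$ involves the $\{1,2\}$ block, but your constraint is on the $\{2,3\}$ block. Interlacing with the $\{1,2\}$ block alone allows $\bm{v}_1(3)^2$ arbitrarily close to $1$ (take $\bm{T}$ near $\operatorname{diag}(0,0,1)$).

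\emph{The ansatz points the wrong way.} Let $\mu_2\ge\mu_3$ be the non-leading eigenvalues of $\bm{T}$, $\bm{u}_2$ a unit eigenvector for $\mu_2$, and $\rho=1-r$. The vector $(\bm{T}-\mu_3\mathbf{I})\bm{e}_3$ lies in $\operatorname{span}\{\bm{e}_2,\bm{e}_3\}$. So if the $\{2,3\}$ block has norm below $\rho$, its Rayleigh quotient is below $\rho$, which forces
\[
\bm{v}_1(3)^2(1-\rho)(1-\mu_3)^2<|\bm{u}_2(3)|^2(\rho-\mu_2)(\mu_2-\mu_3)^2 .
\]
Hence $\bm{v}_1(3)\to 0$ as $\mu_2\to\rho$. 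Pushing the non-leading eigenvalues of $\bm{T}$ toward $\pm(1-r)$ destroys the overlap rather than maximizing it.

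The extremizer is dictated by the filter $(2\lambda-\rho)^2(\lambda+\rho)$ from the paper's converse (threshold) argument. The non-leading eigenvalues of $\bm{T}$ sit at its roots $\rho/2$ and $-\rho$, and the end block then has eigenvalues $\rho$ and $-\rho/2$. The paper builds exactly this matrix, using $\rho=1-q$ with $q\in(r,1)$ slightly larger than $r$ so the inequalities stay strict. It uses the rotation $\bm{R}$ and $d=\sqrt{(1-q^2)/2}$, verifies $\det(z\mathbf{I}-\bm{H})=(z-1)(z-(1-q)/2)(z-(q-1))$, and computes the squared end coordinate $2(1-q)^3/[(1+q)^2(2-q)]$ directly. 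Without such an explicit matrix, or an actual solution of the extremal problem, your argument proves the proposition only for $\delta$ below whatever overlap your unspecified $\bm{T}$ happens to achieve.
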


Without loss of generality, \cref{prop:greedy_negative_thm} assumes the leading eigenvalue is $\lambda_1 = 1$, and all other eigenvalues are in the range $[-(1-r), 1-r]$.
The proposition then identifies a threshold value for the initial squared overlap,
\begin{equation*}
    |\bm{v}_1^* \bm{x}_0|^2 = \frac{2(1-r)^3}{(1+r)^2(2-r)}.
\end{equation*}
For example, this threshold exceeds $0.95$ when the spectral gap is $r < 0.01$, as occurs in the chemistry experiments in \cref{sec:experiments_chemistry}.
Asymptotically, it is $1 - \frac{9}{2}r + \mathcal{O}(r^2)$ as $r \rightarrow 0$.

When the initial squared overlap lies below the threshold, the iterates can immediately lose all overlap with the leading eigenvector.
Thus, sparsified submatrix selection lacks the regular convergence guarantee of the sparsified power method, which provides a high-quality leading eigenvector estimate given any nonzero initial overlap, provided the leading eigenvector is compressible and the number of nonzeros is chosen suitably high.

Conversely, when the squared overlap starts above the threshold, complete loss of overlap is impossible, because the Rayleigh quotient exceeds $1-r$ in the very first iteration.
Specifically, we can evaluate the shifted Rayleigh quotient
\begin{equation*}
    \frac{\bm{z}^* \bm{A} \bm{z}}{\bm{z}^* \bm{z}} - (1-r) = \frac{\bm{z}^* (\bm{A} - (1-r) \mathbf{I}) \bm{z}}{\bm{z}^* \bm{z}}
\end{equation*}
with the vector $\bm{z} = (2 \bm{A} + (1-r) \mathbf{I}) \bm{x}_0$.
Since $\bm{x}_0$ has unit norm and $\bm{A} + (1-r)\mathbf{I}$ is positive-semidefinite,
\begin{align*}
    \bm{z}^* (\bm{A} - (1-r) \mathbf{I}) \bm{z} &= \bm{x}_0^* \bigl(2\bm{A} - (1-r) \mathbf{I} \bigr)^2 \bigl(\bm{A} + (1-r) \mathbf{I} \bigr) \bm{x}_0 - 2(1-r)^3 \\
    &\geq |\bm{v}_1^* \bm{x}_0|^2 (1+r)^2(2-r) - 2(1-r)^3.
\end{align*}
This last quantity is strictly positive when $|\bm{v}_1^* \bm{x}_0|^2$ exceeds the threshold $2(1-r)^3 / [(1+r)^2 (2-r)]$.
The variational principle then guarantees
\begin{equation*}
    \bm{x}_1^* \bm{A} \bm{x}_1 > 1-r,
\end{equation*}
because the approximation space for the leading eigenvector includes $\bm{z}$.
Monotonicity then guarantees
\begin{equation*}
    \bm{x}_t^* \bm{A} \bm{x}_t > 1-r, \qquad t=2,3,\ldots,
\end{equation*}
which forces every iterate to have nonzero overlap with the leading eigenvector.

In summary, the counterexample isolates a specific failure mechanism: when the first Ritz value has magnitude at most $|\lambda_2|$, sparsification can destroy all overlap with the leading eigenvector.
Once the first Ritz value has magnitude exceeding $|\lambda_2|$, monotonicity rules out this catastrophic failure.
Yet despite the worst-case theory placing heavy restrictions on the initial overlap, sparsified submatrix-selection methods reach a given accuracy in fewer outer iterations than the truncated power method on our chemical systems, as shown in \cref{sec:experiments_chemistry}.
Identifying structural conditions that explain this success remains an open problem.

\subsection{Related work}

Several alternative approaches are designed to approximate leading eigenpairs of a matrix without reading all the nonzero entries.

One group of algorithms estimates outlying eigenvalues or eigenvectors from a uniformly sampled principal submatrix \cite{BhattacharjeeDexterDrineas:2023,SwartworthWoodruff:2025} or uniformly sampled set of columns \cite{BhattacharjeeMuscoRutkowski:2026}.
The uniform-sampling eigenvalue bounds scale with $\max_{i,j} |\bm{A}(i,j)|\,n$, so they give dimension-independent relative-error guarantees only for matrices with $\Omega(n^2)$ nonzeros.
Moreover, a dimension-independent number of uniform samples can completely miss the support of a sparse eigenvector.
Some papers obtain stronger bounds using sparsity- or norm-proportional sampling, but those results require sampling access not available in our sparse-column model.

Other algorithms compute low-rank approximations for targeted matrix classes, such as positive-semidefinite matrices \cite{MuscoWoodruff:2017,ChenEpperlyTropp:2025}, kernel matrices \cite{MuscoMusco:2017,BackursIndykMusco:2021,BakshiIndykKacham:2023}, or positive-semidefinite Toeplitz or Hankel matrices \cite{MuscoSheth:2024,KapralovMuscoSheth:2026}.
The algorithms for general positive-semidefinite matrices retain at least linear dependence on $n$, while the Toeplitz and Hankel algorithms achieve polylogarithmic dependence on $n$ by exploiting much stronger matrix structure.

Last, related algorithms approximate the eigenvalue density for graph adjacency and Laplacian matrices \cite{Cohen-SteinerKongSohler:2018,BravermanKrishnanMusco:2022,JinKarmarkarMusco:2024}.
Without further assumptions, the eigenvalue density is not directly comparable to the outlying eigenvalues, since a fixed number of outlying eigenvalues accounts for only a $\mathcal{O}(1/n)$ fraction of the distributional mass.

\section{Quantum chemistry experiments}
\label{sec:experiments_chemistry}

This section presents case studies of large-scale eigenproblems in quantum chemistry.
\Cref{sec:fci_matrices} describes how massive sparse matrices arise naturally in this context, and \cref{sec:neon,sec:ozone} report numerical results for neon and ozone.

\subsection{Large-scale eigenproblems in electronic structure theory}
\label{sec:fci_matrices}

Consider an electronic-structure problem with $N$ electrons.
The electronic Hamiltonian acts on wavefunctions of $3N$ continuous spatial coordinates and $N$ discrete spin variables.
We form a Galerkin discretization by forming Slater determinants from $M$ spatial orbitals, corresponding to $L=2M$ spin-orbitals.
Each Slater determinant occupies $N$ of the $L$ spin-orbitals, so the full determinant basis has dimension
\begin{equation*}
    n=\binom{L}{N},
\end{equation*}
before imposing additional spin or spatial-symmetry restrictions.

Although $\bm{H}$ can be exponentially large, the number of nonzero entries per row or column is polynomial in the numbers of electrons and orbitals.
The electronic Hamiltonian is a sum of one- and two-electron operators, and two determinants have a zero Hamiltonian matrix element unless they differ by at most a double excitation \cite{szabo2012modern}.
Consequently, the maximum column sparsity $s$ satisfies
\begin{equation*}
    s=\max_j\operatorname{nnz}\bigl(\bm{H}(\mathord{\cdot},j)\bigr) \leq \sum_{\ell=0}^2 \binom{N}{\ell}\binom{L-N}{\ell}.
\end{equation*}
Fixing the numbers of spin-up and spin-down electrons and restricting to a point-group symmetry block give sharper bounds.
Each nonzero matrix element can be assembled on demand as a linear combination of one- and two-electron orbital integrals.

Many electronic-structure algorithms seek the ground state, defined after discretization as the eigenvector of $\bm{H}$ with the smallest real eigenvalue.
To promote this goal, orbitals are constructed that encourage empirical compressibility of the ground state.
The canonical Hartree--Fock orbitals are eigenvectors of a self-consistent Fock operator, ordered by increasing orbital energy.
We order the determinant basis so that its first element is the Hartree--Fock determinant.
For many systems near their equilibrium geometries, the Hartree--Fock determinant provides an accurate reference, and the remaining correlation is compactly represented by excitations from that determinant.

\begin{algorithm}[t]
\caption{Sparsified power method}
\label{alg:sparsified}
\raggedright
{\bfseries Input}: Hermitian matrix $\bm{A}\in\mathbb{C}^{n\times n}$; truncation parameter $m$; unit vector $\bm{x}_0$ satisfying $\operatorname{nnz}(\bm{x}_0)\leq m$; number of iterations $q$.
\par {\bfseries Output}: Approximation $(\hat{\lambda}_1,\hat{\bm{v}}_1)$ to a leading eigenpair of $\bm{A}$.
\par
\begin{algorithmic}[1]
\FOR{$t=1, \ldots, q$}
\STATE $\bm{y}_t\gets \bm{A} \bm{x}_{t-1}/\lVert \bm{A} \bm{x}_{t-1}\rVert$
\STATE $\bm{z}_t \gets \bm{\psi}_m(\bm{y}_t)$
\COMMENT{Keep $m$ largest-magnitude elements and zero out the rest}
\STATE $\bm{x}_t \gets \bm{z}_t / \lVert \bm{z}_t \rVert$
\ENDFOR
\STATE {\bfseries Return} $(\hat{\lambda}_1, \hat{\bm{v}}_1) = (\bm{x}_q^\ast \bm{A} \bm{x}_q, \bm{x}_q)$.
\end{algorithmic}
\end{algorithm}

\begin{algorithm}[t]
\caption{Sparsified submatrix selection method}
\label{alg:sparsified_greedy}
\raggedright
{\bfseries Input}: Hermitian matrix $\bm{A}\in\mathbb{C}^{n\times n}$; truncation parameter $m$; unit vector $\bm{x}_0$ satisfying $\operatorname{nnz}(\bm{x}_0)\leq m$; number of iterations $q$.\par {\bfseries Output}: Approximation $(\hat{\lambda}_1,\hat{\bm{v}}_1)$ to a leading eigenpair of $\bm{A}$.
\par
\begin{algorithmic}[1]
\FOR{$t=1, \ldots, q$}
\STATE $\bm{y}_t\gets \bm{A} \bm{x}_{t-1}/\lVert \bm{A} \bm{x}_{t-1}\rVert$
\STATE $\bm{z}_t \gets \bm{\psi}_m(\bm{y}_t)$
\COMMENT{Keep $m$ largest-magnitude elements and zero out the rest}
\STATE $\mathsf{S}_t\gets \operatorname{supp}(\bm{z}_t)$
\STATE $(\lambda_t , \tilde{\bm{x}}_t) \gets$ a leading eigenpair of $\bm{A}(\mathsf{S}_t,\mathsf{S}_t)$
\STATE $\bm{x}_t\gets \mathbf{I}(\mathord{\cdot}, \mathsf{S}_t) \tilde{\bm{x}}_t$
\ENDFOR
\STATE {\bfseries Return} $(\hat{\lambda}_1, \hat{\bm{v}}_1) = (\lambda_q, \bm{x}_q)$.
\end{algorithmic}
\end{algorithm}

We initialize our methods with $\bm{x}_0=\bm{e}_1$.
We estimate the ground-state eigenpair using the truncated power method (\cref{alg:sparsified}) and the sparsified submatrix selection method (\cref{alg:sparsified_greedy}).
TPM deterministically sparsifies each normalized matrix-vector product, whereas submatrix selection uses the retained support to define a principal submatrix and returns its leading eigenvector.
We retain only the largest entries of the matrix-vector product, making \cref{alg:sparsified_greedy} more aggressive than the support-expansion rule analyzed in \cref{sec:sparsified_bag_growing}.
Both methods maintain iterates satisfying $\operatorname{nnz}(\bm{x}_t)\leq m$.

For the neon and ozone test systems, the ground-state eigenvalue is also the unique largest-magnitude eigenvalue, so we apply our methods directly to $\bm{H}$.
We define $\bm{H}$ to include the frozen-core energy constant and add the nuclear-repulsion energy to the reported energies.
For our neon experiments, we declare convergence at the first iteration satisfying
\begin{equation*}
    |\hat{E}_t-\hat{E}_{t-1}| < 10^{-11}\,\mathrm{Ha},
\end{equation*}
where $\mathrm{Ha}$ denotes the hartree, the atomic energy unit used for the eigenvalues.
For our ozone experiments, neither method reaches the convergence threshold.
We compute orbitals, symmetry labels, and integrals with PySCF \cite{SunBerkelbachBlunt:2018,SunZhangBanerjee:2020}.

More generally, suppose that the eigenvalues of $\bm{H}$ satisfy
\begin{equation*}
    E_0 < E_1 \leq \cdots \leq E_{n-1},
\end{equation*}
where $E_0$ is the ground-state energy.
Applying the power method to
\begin{equation*}
    \bm{A}_\mu=\bm{H}-\mu\mathbf{I}
\end{equation*}
gives the relative spectral gap
\begin{equation*}
    1 - \frac{\max_{1 \le i \le n-1}|E_i-\mu|}{|E_0-\mu|}.
\end{equation*}
The shift that maximizes this quantity is
\begin{equation*}
    \mu_\star = \frac{E_1+E_{n-1}}{2}.
\end{equation*}
The eigenvalues $E_1$ and $E_{n-1}$ are generally unknown, so in practice one must estimate the shift or compare several candidates.
A poor shift can substantially slow TPM convergence, whereas the sparsified submatrix selection method is empirically much less sensitive to the shift.

\subsection{Neon} \label{sec:neon}

For our experiments with the neon atom, we use the aug-cc-pVDZ basis \cite{10.1063/1.462569}.
To reduce the problem size, we freeze the $1s^2$ core, leaving $N=8$ active electrons in $L=44$ active spin-orbitals.
We then restrict to the ground-state point-group symmetry block within $M_S=0$, represented in the Slater determinant basis.
The resulting block contains $n\approx 7\times10^6$ basis elements and has column sparsity $s\leq 2\times10^3$.

\begin{figure}[t]
\centering
\includegraphics[width=\linewidth]{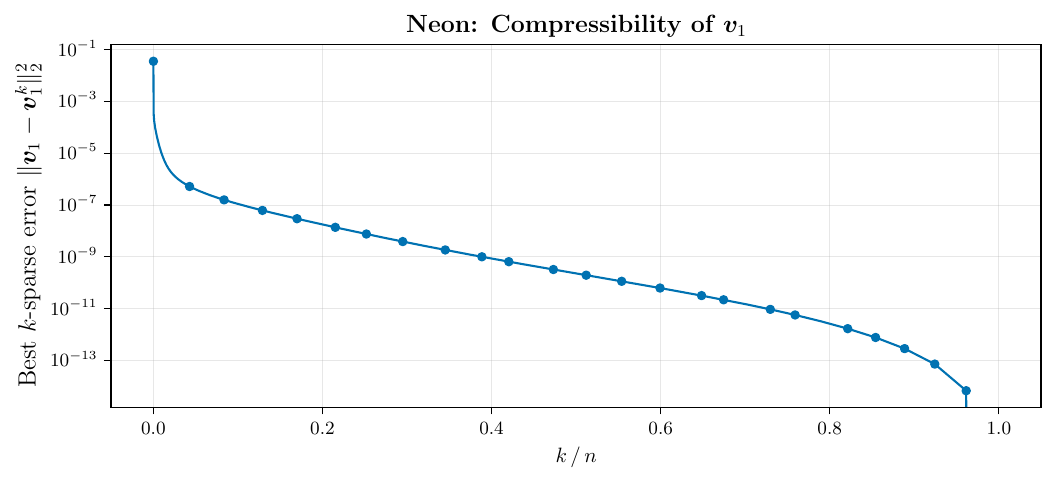}
\caption{Error $\lVert \bm{v}_1-\bm{v}^{(k)}_1\rVert_2^2$, where $\bm{v}^{(k)}_1$ keeps the $k$ largest-magnitude elements of $\bm{v}_1$ and zeros out the rest.}
\label{fig:neon_compressibility}
\end{figure}

The neon symmetry block is small enough that we can use ARPACK \cite{LehoucqSorensenYang:1998} to compute reference eigenpairs.
We find that the relative eigenvalue gap is $r=1-|\lambda_2/\lambda_1|\approx 0.0057$.
Additionally, the leading eigenvector $\bm{v}_1$ is empirically compressible, as illustrated in \cref{fig:neon_compressibility}, and its squared overlap with the Hartree--Fock determinant is
\begin{equation*}
    |\bm{v}_1^* \bm{x}_0|^2 \approx 0.96.
\end{equation*}
This high initial overlap nevertheless lies slightly below the threshold appearing in \cref{sec:sparsified_bag_growing}, which evaluates to
\begin{equation*}
    \frac{2(1-r)^3}{(1+r)^2(2-r)} \approx 0.975.
\end{equation*}
With this initial overlap and spectral gap, complete loss of overlap after a single iteration is possible in the worst case.

The worst case does not happen for neon, however.
At the truncation fraction $m/n=0.021$, two iterations of sparsified submatrix selection reduce the relative eigenvalue error below the chemical-accuracy threshold.
Here chemical accuracy means an absolute energy error below $1\,\mathrm{kcal\,mol^{-1}}$ \cite{ChienHolmes2018}, which corresponds to the relative-error threshold $1.2\times10^{-5}$ shown in the figure.

\begin{figure}[t]
\centering
\includegraphics[width=\textwidth]{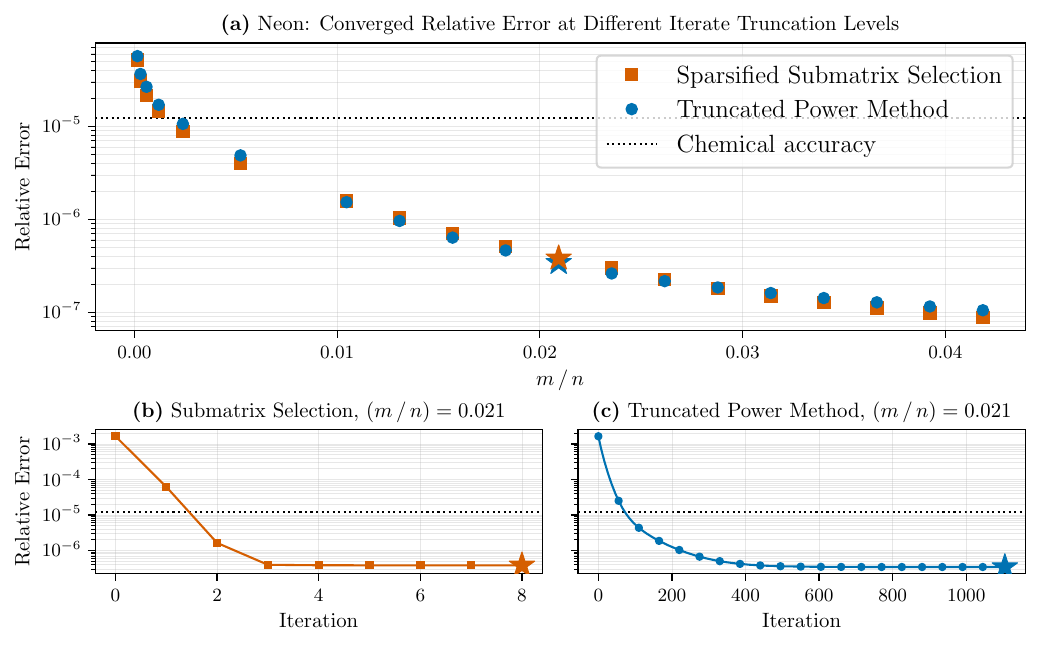}
\caption{Convergence of the truncated power method (TPM) and the sparsified submatrix selection method for the neon Hamiltonian, restricted to the ground-state symmetry block of dimension $n\approx6.69\times10^6$.
Relative eigenvalue error is $|\hat{E}-E_{\mathrm{ref}}|/|E_{\mathrm{ref}}|$, where $E_{\mathrm{ref}}$ is the benchmark ground-state energy computed from the full, untruncated symmetry block.
Panel~(a) shows terminal relative error versus the terminal support fraction $\operatorname{nnz}(\hat{\bm{v}}_1)/n$ as the truncation fraction is varied.
Panels~(b) and~(c) show the error by iteration for sparsified submatrix selection and TPM, respectively, at $m/n=0.021$.
Stars mark the terminal values of these runs and the corresponding points in panel~(a).
The dotted line is the relative-error threshold corresponding to an absolute error of $1\,\mathrm{kcal\,mol^{-1}}$.}
\label{fig:neon_results}
\end{figure}

TPM and sparsified submatrix selection attain similar terminal errors for all the truncation parameters we tried.
However, they converge at sharply different rates.
Sparsified submatrix selection converges in only a few iterations, whereas TPM requires hundreds.
\Cref{fig:neon_results} reports the unshifted runs.
In an additional run, we applied the optimal shift $\bm{A}_{\mu_\star}=\bm{H}-\mu_{\star} \mathbf{I}$ with $\mu_{\star}\approx -102\,\mathrm{Ha}$, which reduced the TPM's iteration count required for convergence by a factor of approximately five.
Nevertheless, TPM still required over an order of magnitude more iterations than sparsified submatrix selection.

\subsection{Ozone} \label{sec:ozone}

For our experiments with the ozone molecule ($\text{O}_3$), we use the cc-pVTZ basis \cite{10.1063/1.456153} at the equilibrium geometry $\theta=116.6^\circ$ and $r_{\mathrm{O-O}}=1.2885\,\text{\AA}$ \cite{ChienHolmes2018}.
To reduce the problem size, we freeze the three oxygen $1s$ orbitals, leaving $N=18$ active electrons in $L=174$ active spin-orbitals.
We then restrict to the $A_1$ spatial-symmetry block of $C_{2v}$ and, within $M_S=0$, to the even spin-parity sector, represented in a spin-parity-adapted basis rather than the Slater determinant basis \cite{HolmesUmrigarSharma:2017}.
The Hartree--Fock basis element remains unchanged, while every other basis element is a combination of at most two Slater determinants.
The resulting block contains $n\approx 3\times10^{22}$ basis elements and has column sparsity $s\leq 3\times10^5$.

Ozone is a more challenging test problem than neon because the squared overlap of the reference ground state with the Hartree--Fock determinant is approximately $0.74$, compared with $0.96$ for neon.
With a truncation parameter $m=10^6$, \cref{fig:ozone} shows that the sparsified submatrix selection method reduces the ozone error substantially faster than TPM across the plotted outer iterations.
However, neither method reaches chemical accuracy or satisfies our numerical convergence criterion during the reported run.
We therefore turn to a method that separates the truncation budget from the variational-space budget and adds a perturbative correction.

\begin{figure}[t]
\centering
\includegraphics[width=\textwidth]{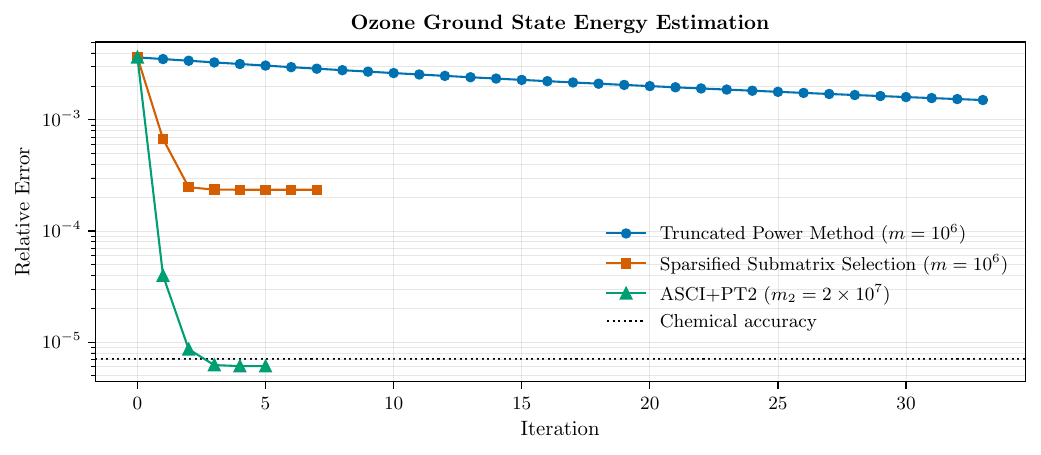}
\caption{Relative energy error by outer iteration for TPM, sparsified submatrix selection, and ASCI+PT2 on the ozone Hamiltonian, whose dimension is $n\approx3\times10^{22}$.
Relative error is measured against the extrapolated reference energy $E_{\mathrm{ref}}=-225.13956\,\mathrm{Ha}$; the dotted line marks chemical accuracy.}
\label{fig:ozone}
\end{figure}

Adaptive Sampling Configuration Interaction (ASCI) \cite{TubmanLeeTakeshita:2016} is a submatrix-selection method that controls the number of Hamiltonian elements accessed during the search step with a truncation parameter $m_1$.
At each iteration, ASCI retains the $m_1$ largest-magnitude entries of the previous variational eigenvector $\bm{x}_{t-1}$ to form $\hat{\bm{x}}_{t-1}$.
It ranks each candidate index $i$ according to
\begin{equation*}
    r_t(i) = \begin{cases}
    \hat{\bm{x}}_{t-1}(i), & i\in\operatorname{supp}(\hat{\bm{x}}_{t-1}),\\[3pt]
    \frac{(\bm{H}\hat{\bm{x}}_{t-1})(i)}{\lambda_{t-1}-\bm{H}(i,i)}, & \text{otherwise},
    \end{cases}
\end{equation*}
where $\lambda_{t-1}$ is the previous variational energy.
It then selects up to $m_2$ indices with the largest values of $|r_t(i)|$ to form the active set $\mathsf{S}_t$ and constructs the next iterate from the ground-state eigenpair of $\bm{H}(\mathsf{S}_t,\mathsf{S}_t)$.
By choosing a large variational-space budget $m_2 > m_1$, it allows exploration of an enhanced variational subspace.

We make another modification based on adding a perturbative correction.
For each active set $\mathsf{S}_t$, we augment the variational energy $\lambda_t$ with the screened Epstein--Nesbet correction
\begin{equation*}
    \Delta E_2^{(\tau)} = \sum_{i\notin\mathsf{S}_t} \frac{\left|\sum_{j\in\mathsf{S}_t, \, |\bm{H}(i,j)\bm{x}_t(j)|>\tau} \bm{H}(i,j)\bm{x}_t(j)\right|^2}{\lambda_t-\bm{H}(i,i)}.
\end{equation*}
Here $\tau\geq0$ is a screening threshold measured in hartrees; $\tau=0$ gives the unscreened Epstein--Nesbet correction.
The resulting ASCI+PT2 estimate of the electronic energy is $\lambda_t+\Delta E_2^{(\tau)}$.
With $(m_1,m_2,\tau) = (10^5,\allowbreak\;2\times10^7,\allowbreak\;10^{-4}\,\mathrm{Ha})$, \cref{fig:ozone} shows that ASCI+PT2 reaches chemical accuracy relative to our extrapolated reference after only a few outer iterations.

Last, we construct an ozone reference energy by extrapolating five ASCI+PT2 calculations toward zero perturbative correction \cite{HolmesUmrigarSharma:2017,10.1021/acs.jctc.1c00272}.
We use the following parameter sets $(m_1,m_2,\tau)$ for the reference-energy extrapolation:
\begin{equation*}
    (m_1,m_2,\tau)\in \left\{ \begin{gathered}
    (10^4,\,2\times10^5,\,0),\quad (2.5\times10^4,\,5\times10^5,\,0),\\
    (5\times10^4,\,10^6,\,2\times10^{-4}),\quad (10^5,\,2\times10^6,\,2\times10^{-4}),\\
    (1.5\times10^5,\,3\times10^6,\,2\times10^{-4})
    \end{gathered} \right\},
\end{equation*}
where $\tau$ is measured in hartrees.
We fit the corrected total energies linearly against the corresponding values of $|\Delta E_2^{(\tau)}|$ and take the intercept at $|\Delta E_2^{(\tau)}|=0$.
The fit in \cref{fig:extrapolation} has $R^2=0.9966$ and gives $E_{\mathrm{ref}}=-225.13956\,\mathrm{Ha}$.

\begin{figure}[t]
\centering
\includegraphics[width=\textwidth]{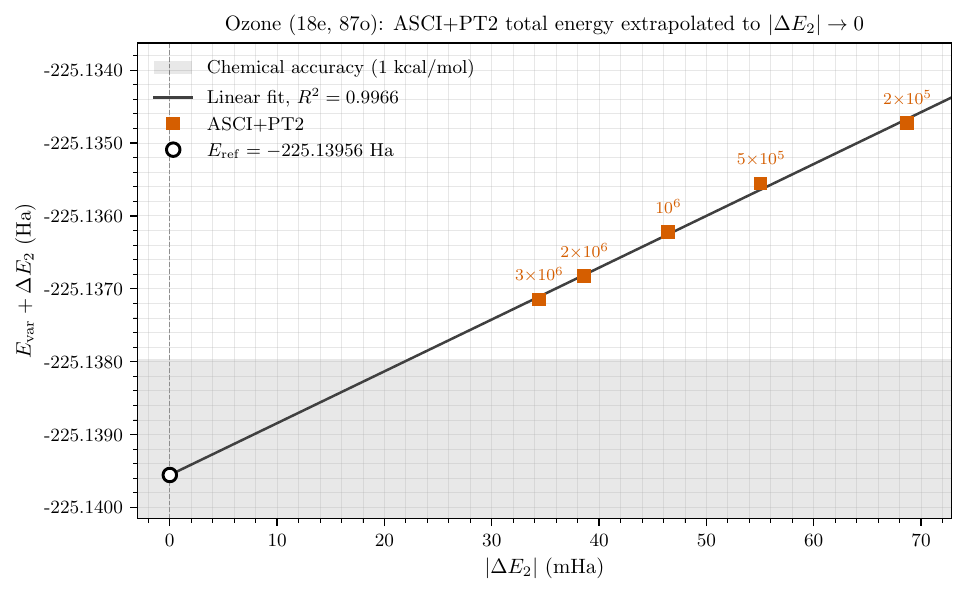}
\caption{Linear extrapolation of the ozone ASCI+PT2 energy estimates to zero perturbative correction.
The fit has $R^2=0.9966$ and intercept $E_{\mathrm{ref}}=-225.13956\,\mathrm{Ha}$.}
\label{fig:extrapolation}
\end{figure}

These chemistry experiments illustrate both the usefulness and the practical limitations of the truncated power method.
They also show how successful practical methods rely on ingredients that go far beyond our worst-case convergence theory.
Submatrix selection is often combined with sparsification and perturbative correction to supply an accurate reference at an acceptable cost.

\section{Analysis of classical iterative algorithms}
\label{sec:analysis_classical}

As discussed in \Cref{sec:introduction}, standard iterative methods obtain dimension-independent runtime for sparse matrices, given a sparse initial vector that has dimension-independent overlap with a leading eigenvector.
In \cref{sec:lanczos}, we formalize this claim through an analysis of the classic Lanczos method.
In \cref{sec:power}, we show that a slightly weaker bound can be obtained via a simpler analysis of the power method.

\subsection{Analysis of Lanczos method} \label{sec:lanczos}

\begin{algorithm}[t]
\caption[Lanczos method]{Lanczos method}
\label{alg:lanczos}
\raggedright
{\bfseries Input}: Hermitian matrix $\bm{A}\in \mathbb{C}^{n \times n}$.
Starting unit-norm vector $\bm{x}_0 \in \mathbb{C}^{n}$.
Krylov depth $q$.
\par {\bfseries Output}: Approximation $(\hat{\lambda}_1, \hat{\bm{v}}_1)$ to largest-magnitude eigenpair $(\lambda_1,\bm{v}_1)$ of $\bm{A}$.\par
\begin{algorithmic}[1]
\STATE Using the Lanczos recurrence, compute an orthonormal basis $\bm{Q}$ for $\operatorname{span}\{\bm{x}_0,\bm{A}\bm{x}_0,\ldots,\bm{A}^q\bm{x}_0\}$, while forming $\bm{T}=\bm{Q}^*\bm{A}\bm{Q}$.
If the Krylov space becomes invariant earlier, stop and use the smaller attained basis.
\STATE Compute a largest-magnitude eigenpair $(\tilde{\lambda}_1, \tilde{\bm{v}}_1)$ of $\bm{T}$.
\STATE {\bfseries Return}: $(\hat{\lambda}_1, \hat{\bm{v}}_1) = (\tilde{\lambda}_1, \bm{Q} \tilde{\bm{v}}_1)$.
\end{algorithmic}
\end{algorithm}

Here we analyze the standard Lanczos algorithm \cite{Lanczos:1950} presented in \cref{alg:lanczos}.
We establish a gap-independent convergence bound in terms of the initial overlap with a leading eigenvector.
In the classical analysis of the Lanczos method \cite{KuczynskiWozniakowski:1992,Tropp:2022}, a uniformly random starting direction leads to relative eigenvalue error $\varepsilon$ after $\mathcal{O}(\varepsilon^{-1/2}\log n)$ iterations, either in expectation or at a fixed failure probability.
Here we prove a new complexity bound of $\mathcal{O}(\varepsilon^{-1/2})$ iterations, assuming dimension-independent overlap with the leading eigenvector.
If each column of $\bm{A}$ has at most $s$ nonzeros, then forming the projected problem on $\operatorname{span}\{\bm{x}_0,\bm{A}\bm{x}_0,\ldots, \bm{A}^q\bm{x}_0\}$ requires reading at most $\operatorname{nnz}(\bm{x}_0)(s+1)^{q+1}$ matrix entries, yielding \cref{prop:informal_introduction}.

We prove \cref{thm:gap_indep} using variational and polynomial arguments \cite{Saad:2011,Tropp:2022}.
We express the polynomial filter in terms of Chebyshev polynomials of the fourth kind \cite{MasonHandscomb:2003}, which makes the required estimates particularly simple.

\begin{theorem}[Lanczos convergence in terms of starting vector]
\label{thm:gap_indep}
Suppose $\bm{A} \in \mathbb{C}^{n \times n}$ is a Hermitian matrix with a leading eigenpair $(\lambda_1, \bm{v}_1)$, and let $\bm{x}_0 \in \mathbb{C}^n$ be a unit-norm starting vector with $\bm{v}_1^* \bm{x}_0 \neq 0$.
Fix a target accuracy $\varepsilon \in (0, 1)$.
For any Krylov depth satisfying
    \begin{equation*}
    q \geq \frac{1}{\sqrt{8 \varepsilon}} \log\Biggl(\frac{4}{|\bm{v}_1^* \bm{x}_0|^2}\Biggr),
\end{equation*}
\cref{alg:lanczos} returns an estimate $\hat{\lambda}_1$ satisfying $(1 - \varepsilon) |\lambda_1| \leq |\hat{\lambda}_1| \leq |\lambda_1|$.
\end{theorem}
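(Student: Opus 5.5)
We may assume $\lambda_1>0$: replacing $\bm{A}$ by $-\bm{A}$ leaves the Krylov space unchanged and only flips the sign of the Ritz values. After rescaling, $\lambda_1=1$, so every eigenvalue lies in $[-1,1]$. The upper bound $|\hat\lambda_1|\le|\lambda_1|$ is immediate because Ritz values lie in the numerical range of $\bm{A}$.

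For the lower bound, I will use the variational characterization. For any polynomial $p$ of degree at most $q$, the vector $\bm{y}=p(\bm{A})\bm{x}_0$ lies in the Krylov space, so $|\hat\lambda_1|\ge \bm{y}^*\bm{A}\bm{y}/\bm{y}^*\bm{y}$. Write $c_i=\bm{v}_i^*\bm{x}_0$ and $\delta=|c_1|^2$. Then it suffices to show
\begin{equation*}
\sum_i (\lambda_i-(1-\varepsilon))\,|p(\lambda_i)|^2|c_i|^2\ge 0.
\end{equation*}
Terms with $\lambda_i\ge 1-\varepsilon$ are nonnegative, and the $i=1$ term equals $\varepsilon\,p(1)^2\delta$. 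Since $\sum_i|c_i|^2=1$, a sufficient condition is
\begin{equation*}
\varepsilon\,\delta\, p(1)^2 \;\ge\; \max_{\lambda\in[-1,1-\varepsilon]} (1-\varepsilon-\lambda)\,p(\lambda)^2 .
\end{equation*}

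To satisfy this, I choose $p(\lambda)=W_q(L(\lambda))$. Here $W_q(\cos\theta)=\sin((q+\tfrac12)\theta)/\sin(\theta/2)$ is the Chebyshev polynomial of the fourth kind, and $L(\lambda)=(2\lambda+\varepsilon)/(2-\varepsilon)$ maps $[-1,1-\varepsilon]$ affinely onto $[-1,1]$ with $L(1-\varepsilon)=1$. The weighted bound is what makes the fourth kind convenient: since $1-\cos\theta=2\sin^2(\theta/2)$, we get $(1-x)W_q(x)^2=2\sin^2((q+\tfrac12)\theta)\le 2$ on $[-1,1]$. Because $1-\varepsilon-\lambda=\tfrac{2-\varepsilon}{2}(1-L(\lambda))$, the right-hand side is at most $2-\varepsilon$.

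At $\lambda=1$ we have $L(1)=(2+\varepsilon)/(2-\varepsilon)=\cosh\varphi$, so $W_q(L(1))=\sinh((q+\tfrac12)\varphi)/\sinh(\varphi/2)$. Moreover $\sinh^2(\varphi/2)=(\cosh\varphi-1)/2=\varepsilon/(2-\varepsilon)$. Substituting, the sufficient condition collapses to
\begin{equation*}
\delta\,\sinh^2\bigl((q+\tfrac12)\varphi\bigr)\ge 1 .
\end{equation*}
To lower bound $\varphi$, set $t=\sinh(\varphi/2)$ and use $\operatorname{arcsinh} t\ge t/\sqrt{1+t^2}$, which holds by comparing derivatives. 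Since $t/\sqrt{1+t^2}=\sqrt{\varepsilon/2}$, this gives $\varphi\ge\sqrt{2\varepsilon}$. The hypothesis on $q$ then yields $e^{2q\varphi}\ge e^{2q\sqrt{2\varepsilon}}\ge 4/\delta$; indeed $2\sqrt{2\varepsilon}=\sqrt{8\varepsilon}\cdot\tfrac{\sqrt{8\varepsilon}}{2\sqrt{2\varepsilon}}\cdot\ldots$ works out to exactly $e^{q\sqrt{8\varepsilon}}$.

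The main obstacle I expect is this last constant-chasing step: passing from $e^{2x}\gtrsim 4/\delta$, with $x=(q+\tfrac12)\varphi$, to $\sinh^2 x\ge 1/\delta$. The crude bound $\sinh x\ge \tfrac{e^x}{2}(1-e^{-2x})$ loses a factor such as $3/4$. I would first try to absorb this loss using the extra $\varphi/2$ in the argument, together with $e^{-2x}\le\delta/4$. If that fails, I would try the exact identity $\sinh^2x=(e^{x}-e^{-x})^2/4$ with $e^{x}\ge 2/\sqrt\delta$. If the stated constant still does not close, I would revisit whether the intended bound reads $e^{q\sqrt{8\varepsilon}}$ versus $e^{2q\sqrt{2\varepsilon}}$ and adjust the comparison accordingly; these are equal, so the slack must come from the half-step.
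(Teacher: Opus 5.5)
Your strategy is the paper's. You use the same fourth-kind Chebyshev filter $W_q\bigl((2\lambda+\varepsilon)/(2-\varepsilon)\bigr)$ and the same weighted bound $(1-x)W_q(x)^2\le 2$. Your $\varphi$ is the paper's $\log\alpha$, with the same estimate $\varphi\ge\sqrt{2\varepsilon}$. However, the obstacle you flag at the end is a genuine gap, and it comes from an earlier loss, not from the final constant-chasing.

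When you pass to the sufficient condition, you bound the total weight of the negative terms by $\sum_i |c_i|^2=1$. Your resulting target $\delta\sinh^2 x\ge 1$ is then false at the stated depth. To see this, take $q=\log(4/\delta)/\sqrt{8\varepsilon}$ to be an integer and let $\varepsilon\to 0$. Then $x=(q+\tfrac12)\varphi\to\tfrac12\log(4/\delta)$, so $\delta\sinh^2 x\to 1-\delta/2+\delta^2/16<1$ for every $\delta\in(0,1]$.

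None of your fallbacks can close this. The half-step contributes only $\varphi/2\approx\sqrt{\varepsilon/2}\to 0$, and the exact identity for $\sinh^2$ gives the same limit. As written, your route proves the theorem only with $\log(4/\delta)$ replaced by $\log\bigl((3+2\sqrt2)/\delta\bigr)$.

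The missing observation is that the leading index never contributes a negative term, since $\lambda_1=1>1-\varepsilon$. The negative terms therefore carry total weight at most $\sum_{i\ge 2}|c_i|^2=1-\delta$. Your sufficient condition becomes $\varepsilon\delta\,p(1)^2\ge(2-\varepsilon)(1-\delta)$, that is, $\delta\sinh^2 x\ge 1-\delta$. This follows directly from $e^{x}\ge e^{q\varphi}\ge 2/\sqrt\delta$ and the monotonicity of $y\mapsto y-y^{-1}$:
\begin{equation*}
\sinh^2 x\ \ge\ \Bigl(\frac{1}{\sqrt\delta}-\frac{\sqrt\delta}{4}\Bigr)^2=\frac{1}{\delta}-\frac{1}{2}+\frac{\delta}{16}\ \ge\ \frac{1-\delta}{\delta}.
\end{equation*}
This is exactly how the paper finishes in \eqref{eq:lower_bound}. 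With this one change, your argument is complete and essentially coincides with the paper's proof.
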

\begin{proof}
The method is exact for $\lambda_1 = 0$, so we focus on the case $\lambda_1 \neq 0$.
Scaling $\bm{A}$ by $1/\lambda_1$ does not change the relative eigenvalue error, so we assume $\lambda_1=1$.
Take an eigendecomposition $\bm{A} = \sum_{i=1}^n \lambda_i \bm{v}_i \bm{v}_i^*$ and introduce coefficients $c_i = |\bm{v}_i^* \bm{x}_0|^2$ for $i = 1, \ldots, n$.
Let $\hat{\lambda}_1$ be the eigenvalue estimate returned at Krylov depth $q$.
Then by the variational argument in the proof of \cite[Thm.~6.4]{Saad:2011},
    \begin{equation*}
    0 \leq 1 - |\hat{\lambda}_1| \leq \frac{\sum_{i=2}^n c_i (1 - \lambda_i) p(\lambda_i)^2}{\sum_{i=1}^n c_i p(\lambda_i)^2},
\end{equation*}
where $p$ is any real polynomial of degree at most $q$ for which $p(\bm{A}) \bm{x}_0 \neq 0$.
The major steps are (i) choosing a polynomial, (ii) bounding $p(1)$ from below, and (iii) completing the argument.

\textbf{(i) Choosing a polynomial.} We set
    \begin{equation*}
    p(x) = W_q\biggl(\frac{2 x + \varepsilon}{2 - \varepsilon}\biggr),
\end{equation*}
where $W_q$ is the $q$th Chebyshev polynomial of the fourth kind.
For $x = \cos \theta$ with $\theta \in (0, \pi]$, it is defined by
    \begin{equation*}
    W_q(x) = W_q(\cos \theta) = \frac{\sin\bigl((q + 1/2) \theta\bigr)}{\sin\bigl(\theta/2\bigr)}
\end{equation*}
with its value at $\theta=0$ defined by continuity, so
    \begin{equation*}
    (1 - x) W_q(x)^2 = (1 - \cos \theta) \frac{\sin^2\bigl((q + 1/2) \theta\bigr)}{\sin^2\bigl(\theta/2\bigr)} = 2 \sin^2\bigl(\theta/2\bigr) \frac{\sin^2\bigl((q + 1/2) \theta\bigr)}{\sin^2\bigl(\theta/2\bigr)} \leq 2.
\end{equation*}
We have chosen the Chebyshev polynomial of the fourth kind because of this property and because it increases quickly for $x > 1$.

\textbf{(ii) Bounding $p(1)$ from below.} Now we make a key estimate.
For $x = (\alpha + \alpha^{-1})/2$ with $\alpha > 1$, the Chebyshev polynomial satisfies
    \begin{equation*}
    W_q(x) = W_q\left(\frac{\alpha + \alpha^{-1}}{2}\right) = \frac{\alpha^{q + 1/2} - \alpha^{-(q + 1/2)}}{\alpha^{1/2} - \alpha^{-1/2}}.
\end{equation*}
We choose the parameter
    \begin{equation*}
    \alpha = \frac{1 + \sqrt{\varepsilon/2}}{1 - \sqrt{\varepsilon/2}}
\end{equation*}
and make the calculation
\begin{align*}
    \frac{\varepsilon}{2 - \varepsilon} p(1)^2 &= \frac{\varepsilon}{2 - \varepsilon} W_q\left(\frac{2 + \varepsilon}{2 - \varepsilon}\right)^2 \\
    &= \frac{\alpha + \alpha^{-1} - 2}{4} W_q\left(\frac{\alpha + \alpha^{-1}}{2}\right)^2 \\
    &= \frac{1}{4} \bigl(\alpha^{q + 1/2} - \alpha^{-(q + 1/2)}\bigr)^2.
\end{align*}
By assumption, the depth parameter $q$ satisfies
    \begin{equation*}
    q \geq \frac{\log(4/c_1)}{\sqrt{8\varepsilon}}.
\end{equation*}
Using the inequality $2s \leq \log((1 + s)/(1-s))$, it follows
    \begin{equation*}
    \log(4/c_1) \leq 2q \sqrt{2\varepsilon} \leq 2q\log\left(\frac{1 + \sqrt{\varepsilon/2}}{1 - \sqrt{\varepsilon/2}}\right) = 2q \log \alpha.
\end{equation*}
Consequently,
    \begin{equation*}
    \alpha^{q + 1/2} \geq \frac{2}{\sqrt{c_1}} \quad \text{and} \quad \alpha^{-q-1/2} \leq \frac{\sqrt{c_1}}{2}.
\end{equation*}
We conclude that
    \begin{equation}
    \label{eq:lower_bound}
    \frac{\varepsilon}{2 - \varepsilon} p(1)^2 = \frac{1}{4} \bigl(\alpha^{q + 1/2} - \alpha^{-(q + 1/2)}\bigr)^2 \geq \frac{1}{4} \left(\frac{2}{\sqrt{c_1}} - \frac{\sqrt{c_1}}{2}\right)^2 = \frac{1}{4}\left(\frac{4}{c_1} - 2 + \frac{c_1}{4}\right) \geq \frac{1-c_1}{c_1}.
\end{equation}
We have succeeded in bounding $p(1)$ from below.

\textbf{(iii) Completing the argument.} Finally we will show that
    \begin{equation*}
    \frac{\sum_{i=2}^n c_i (1 - \lambda_i) p(\lambda_i)^2}{\sum_{i=1}^n c_i p(\lambda_i)^2} \leq \varepsilon
\end{equation*}
or equivalently
    \begin{equation*}
    \sum_{i=2}^n c_i (1 - \lambda_i) p(\lambda_i)^2 - \varepsilon \sum_{i=1}^n c_i p(\lambda_i)^2 \leq 0.
\end{equation*}    
To that end, write
    \begin{equation*}
    \sum_{i=2}^n c_i (1 - \lambda_i) p(\lambda_i)^2 - \varepsilon \sum_{i=1}^n c_i p(\lambda_i)^2 = -\varepsilon c_1 p(1)^2 + \sum_{i=2}^n c_i(1-\varepsilon - \lambda_i) p(\lambda_i)^2.
\end{equation*}
The terms in the right-most sum are nonpositive if $\lambda_i > 1 - \varepsilon$.
They are nonnegative if $\lambda_i \leq 1 - \varepsilon$, but the construction of the Chebyshev polynomial ensures that
    \begin{equation*}
    (1 - \varepsilon - \lambda_i) p(\lambda_i)^2 = \frac{2-\varepsilon}{2}(1-t_i) W_q(t_i)^2 \leq 2 - \varepsilon, \qquad \text{where } t_i = \frac{2 \lambda_i + \varepsilon}{2 - \varepsilon}.
\end{equation*}
Using the lower bound estimate \cref{eq:lower_bound}, we therefore obtain the majorant
\begin{align*}
    &-\varepsilon c_1 p(1)^2 + \sum_{i=2}^n c_i(1-\varepsilon - \lambda_i) p(\lambda_i)^2 \\
    &\quad\leq - \varepsilon c_1 p(1)^2 + (2 - \varepsilon) (1 -c_1) \\
    &\quad\leq 0.
\end{align*}
This completes the proof.
\end{proof}

\subsection{Power method analysis}
\label{sec:power}

This section analyzes the power method, as presented in \cref{alg:power}.
The gap-independent convergence bound is weaker than for Lanczos and additionally assumes that $\bm{A}$ is positive semidefinite.
The gap-dependent analysis is classical \cite[Sec.~4.1.1]{Saad:2011}, but we include it here for comparison with our analysis of the truncated power method.

\begin{algorithm}[t]
\caption[Power method]{Power method}
\label{alg:power}
\raggedright
{\bfseries Input}: Hermitian matrix \(\bm{A}\in \mathbb{C}^{n \times n}\).
Starting unit-norm vector \(\bm{x}_0 \in \mathbb{C}^{n}\).
Number of iterations \(q\). \par 
{\bfseries Output}: Approximation $(\hat{\lambda}_1, \hat{\bm{v}}_1)$ to leading eigenpair $(\lambda_1, \bm{v}_1)$ of $\bm{A}$. \par
\begin{algorithmic}[1]
\FOR{$t=1, \ldots, q$}
\STATE $\bm{y}_t\gets \bm{A}\bm{x}_{t-1}$.
\IF{$\bm{y}_t=\bm{0}$}
\STATE {\bfseries Return}: $(0,\bm{x}_{t-1})$.
\ENDIF
\STATE $\bm{x}_t\gets \bm{y}_t/\lVert\bm{y}_t\rVert$.
\ENDFOR
\STATE {\bfseries Return}: $(\hat{\lambda}_1, \hat{\bm{v}}_1) = (\bm{x}_q^\ast \bm{A} \bm{x}_q, \bm{x}_q)$.
\end{algorithmic}
\end{algorithm}

\begin{proposition}[Power method convergence] \label{prop:gapped_power}
Suppose $\bm{A} \in \mathbb{C}^{n\times n}$ is a Hermitian matrix with largest-magnitude eigenpair $(\lambda_1, \bm{v}_1)$ and second-largest-magnitude eigenvalue $\lambda_2$.
Let $\bm{x}_0 \in \mathbb{C}^n$ be a unit-norm starting vector with $\bm{v}_1^* \bm{x}_0 \neq 0$, and fix a target accuracy $\varepsilon \in (0, 1)$.
\begin{enumerate}
\item \textbf{Gapless case.} If $\bm{A}$ is positive-semidefinite, then for any number of iterations satisfying
\begin{equation*}
    q \geq \frac{1}{2 \varepsilon}\log\biggl(\frac{1}{|\bm{v}_1^* \bm{x}_0 \vert^2}\biggr),
\end{equation*}
\cref{alg:power} returns an estimate $\hat{\lambda}_1$ satisfying $|\lambda_1 - \hat{\lambda}_1| \leq \varepsilon|\lambda_1|$.
\item \textbf{Gapped case.} If $|\lambda_2| < |\lambda_1|$, then for any number of iterations satisfying
\begin{equation*}
    q \geq \frac{1}{2(1 - |\lambda_2/\lambda_1|)}\log\biggl(\frac{2}{\varepsilon\vert \bm{v}_1^\ast \bm{x}_0\vert^2}\biggr),
\end{equation*}
\cref{alg:power} returns an estimate $(\hat{\lambda}_1, \hat{\bm{v}}_1)$ satisfying
\begin{equation*}
    |\bm{v}_1^* \hat{\bm{v}}_1|^2 \geq 1 - \varepsilon/2 \quad \text{and therefore} \quad |\lambda_1 - \hat{\lambda}_1| \leq \varepsilon|\lambda_1|.
\end{equation*}
\end{enumerate}
\end{proposition}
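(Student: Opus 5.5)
The plan is to work in the eigenbasis and follow the weights $c_i = |\bm{v}_i^*\bm{x}_0|^2$. As in \cref{thm:gap_indep}, the case $\lambda_1 = 0$ is trivial, so we assume $\lambda_1 \neq 0$. Rescaling gives $|\lambda_1| = 1$, and replacing $\bm{A}$ by $-\bm{A}$ if necessary gives $\lambda_1 = 1$. The power iterate after $q$ steps is $\bm{x}_q = \bm{A}^q\bm{x}_0/\lVert \bm{A}^q \bm{x}_0\rVert$, which is well defined because $\bm{v}_1^*\bm{A}^q\bm{x}_0 = \lambda_1^q \bm{v}_1^*\bm{x}_0 \neq 0$.

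\textbf{Gapped case.} Write
\begin{equation*}
    |\bm{v}_1^*\bm{x}_q|^2 = \frac{c_1}{c_1 + \sum_{i\geq 2} c_i \lambda_i^{2q}} \geq \frac{c_1}{c_1 + (1-c_1)\rho^{2q}},
\end{equation*}
where $\rho = |\lambda_2/\lambda_1|$. Since $\rho^{2q} \leq e^{-2q(1-\rho)}$, the iteration hypothesis gives $\rho^{2q} \leq \varepsilon c_1/2$. Consequently
\begin{equation*}
    1 - |\bm{v}_1^*\bm{x}_q|^2 \leq \frac{(1-c_1)\rho^{2q}}{c_1} \leq \varepsilon/2.
\end{equation*}

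For the eigenvalue statement, write $\bm{x}_q = a\bm{v}_1 + \bm{w}$ with $\bm{w}\perp \bm{v}_1$ and $\lVert \bm{w}\rVert^2 = 1 - |a|^2 \leq \varepsilon/2$. Then
\begin{equation*}
    \hat{\lambda}_1 = |a|^2 + \bm{w}^*\bm{A}\bm{w}, \qquad |\bm{w}^*\bm{A}\bm{w}| \leq \lVert \bm{w}\rVert^2,
\end{equation*}
so
\begin{equation*}
    |1-\hat{\lambda}_1| \leq (1-|a|^2) + \lVert \bm{w}\rVert^2 \leq \varepsilon.
\end{equation*}
The eigenvalue bound therefore follows from the eigenvector bound alone, with no further use of the gap. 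I will state this as a small lemma, since the truncated power method analysis reuses it.

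\textbf{Gapless PSD case.} Now $0 \leq \lambda_i \leq 1$, and
\begin{equation*}
    1 - \hat{\lambda}_1 = \frac{\sum_i c_i \lambda_i^{2q}(1-\lambda_i)}{\sum_i c_i \lambda_i^{2q}}.
\end{equation*}
(The early return $\bm{y}_t = \bm{0}$ cannot happen because $c_1 > 0$.) Split the indices at the threshold $1 - \varepsilon$.

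For indices with $\lambda_i \geq 1-\varepsilon$, the factor $1 - \lambda_i$ is at most $\varepsilon$. These terms therefore contribute at most $\varepsilon$ times their share of the normalized weight.

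For indices with $\lambda_i < 1-\varepsilon$, use the following bounds:
\begin{itemize}
    \item the numerator contribution is at most $\sum c_i\lambda_i^{2q} \leq (1-c_1)(1-\varepsilon)^{2q}$;
    \item the denominator is at least $c_1$;
    \item $(1-\varepsilon)^{2q} \leq e^{-2q\varepsilon} \leq c_1$ under the iteration hypothesis.
\end{itemize}
This is not yet sharp enough, because it only bounds the low-eigenvalue part by $(1-c_1)$ rather than by something of order $\varepsilon$. The estimate can be tightened in two ways:
\begin{itemize}
    \item use the finer bound $\max_{0\le \lambda\le 1-\varepsilon} \lambda^{2q}(1-\lambda)$, which is controlled by its maximizer;
    \item compare directly with $\varepsilon$ via the inequality $\sum_i c_i\lambda_i^{2q}(1-\varepsilon-\lambda_i) \leq \varepsilon\, c_1$, mirroring step (iii) of \cref{thm:gap_indep}.
\end{itemize}

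\textbf{Main obstacle.} The delicate step is the gapless case: proving that $\max_{\lambda\in[0,1]} \lambda^{2q}(1-\varepsilon-\lambda)$ is small enough relative to $\varepsilon c_1$. If this direct bound fails with the stated constant, I will fall back on a monotonicity argument instead. The fallback treats $t \mapsto \bm{x}_t^*\bm{A}\bm{x}_t$ through the log-convex moment sequence $\mu_k = \sum_i c_i \lambda_i^k$. Log-convexity gives that
\begin{equation*}
    \log \frac{\mu_{2q+1}}{\mu_{2q}}
\end{equation*}
is nondecreasing in $q$. Summing these increments telescopes to the chain
\begin{equation*}
    \log \mu_{2q} \geq \log c_1 \qquad \text{with} \qquad \mu_0 = 1.
\end{equation*}
This yields some $k \le 2q$ with $\log(\mu_{k+1}/\mu_k) \geq \log(c_1)/(2q) \ge -\varepsilon$. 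Monotonicity then transfers this to $k = 2q$, giving $\hat{\lambda}_1 = \mu_{2q+1}/\mu_{2q} \geq e^{-\varepsilon} \geq 1-\varepsilon$. I expect this log-convexity route to be the cleanest way to match the constant $1/(2\varepsilon)$.
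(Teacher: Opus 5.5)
Your proposal is correct and essentially matches the paper. The gapped case is the same eigenbasis computation followed by $|1-\hat{\lambda}_1|\le 2(1-|\bm{v}_1^*\bm{x}_q|^2)$. Your log-convexity fallback for the positive-semidefinite case is the paper's H\"older step $M_{2q}\le M_{2q+1}^{2q/(2q+1)}$ in telescoped form; telescoping through $\mu_{2q+1}\ge c_1$ rather than $\mu_{2q}\ge c_1$ gives the paper's slightly sharper $\hat{\lambda}_1\ge c_1^{1/(2q+1)}$.

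The direct route you hedged on also closes. Each term with $i\ge 2$ and $\lambda_i<1-\varepsilon$ satisfies $\lambda_i^{2q}(1-\varepsilon-\lambda_i)\le(1-\varepsilon)^{2q+1}/(2q+1)\le c_1/(2q+1)$. Then $(2q+1)\varepsilon\ge\log(1/c_1)\ge 1-c_1$ bounds the whole sum by $\varepsilon c_1$.
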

\begin{proof}
The method is exact for $\lambda_1 = 0$, so we focus on the case $\lambda_1 \neq 0$.
Scaling $\bm{A}$ by $1/\lambda_1$ does not change the relative eigenvalue error, so we assume $\lambda_1=1$.
Take an eigendecomposition $\bm{A} = \sum_{i=1}^n \lambda_i \bm{v}_i \bm{v}_i^\ast$ and introduce coefficients $c_i = \vert \bm{v}_i^\ast \bm{x}_0\vert^2$ for $i = 1,\ldots, n$.
Let $(\hat{\lambda}_1, \hat{\bm{v}}_1)$ be the eigenpair returned after $q$ iterations of the power method.

\textbf{Gapless case.} Define $M_0=1$ and
    \begin{equation*}
    M_t=\sum_{i=1}^n c_i \lambda_i^t, \qquad t>0.
\end{equation*}
By H\"older's inequality, $M_{2q}\leq M_{2q+1}^{2q/(2q+1)}$.
Consequently,
    \begin{equation*}
    \hat{\lambda}_1 =\frac{M_{2q+1}}{M_{2q}} \geq M_{2q+1}^{1/(2q+1)} \geq c_1^{1/(2q+1)}.
\end{equation*}
Using $1-\mathrm{e}^{-x}\leq x$, we obtain
    \begin{equation*}
    0\leq 1-\hat{\lambda}_1 \leq \frac{\log(1/c_1)}{2q+1}.
\end{equation*}
This proves the claimed bound.

\textbf{Gapped case.} We make the calculation
    \begin{equation*}
    1 - |\bm{v}_1^* \bm{x}_q|^2 = \frac{\sum_{i=2}^n c_i \lambda_i^{2q}}{\sum_{i=1}^n c_i \lambda_i^{2q}} \leq \frac{\lambda_2^{2q}}{c_1} \leq \frac{1}{c_1} \mathrm{e}^{-2q(1 - |\lambda_2|)}.
\end{equation*}
We can take
    \begin{equation*}
    q \geq \frac{1}{2(1 - |\lambda_2|)}\log\biggl(\frac{2}{\varepsilon c_1}\biggr)
\end{equation*}
to guarantee $|\bm{v}_1^* \bm{x}_q|^2 \geq 1 - \varepsilon/2$.
Expansion in an orthonormal eigenbasis then gives
    \begin{equation*}
    1-\bm{x}_q^*\bm{A}\bm{x}_q = \sum_{i=2}^n (1-\lambda_i) |\bm{v}_i^*\bm{x}_q|^2 \leq 2\left(1-|\bm{v}_1^*\bm{x}_q|^2\right) \le \varepsilon.
\end{equation*}
This completes the proof.
\end{proof}

\section{Analysis of sparsified power method}
\label{sec:analysis_sparsified_power}

Here we analyze the truncated power method, which was presented in \cref{alg:sparsified}.
Our analysis balances the progress from the power step against the possible loss from truncation.
The technical crux is controlling the truncation error in terms of the best $k$-term $\ell^2$ approximation error for the leading eigenvector $\bm{v}_1$ and the squared overlap of the current iterate with $\bm{v}_1$.

\begin{proof}[Proof of \cref{thm:main_thm}]
Scaling $\bm{A}$ by $1/\lambda_1$ does not change the eigenvector error or relative eigenvalue error, so we assume $\lambda_1=1$.
We prove the result in four steps.

\paragraph{Step 1. Effect of truncation.}
We will derive a lower bound on $|\bm{v}_1^*\bm{\psi}_m(\bm{y})|^2/ \lVert\bm{\psi}_m(\bm{y})\rVert^2$, valid for any unit-norm vector $\bm{y}$.

We assume for the moment that $m<n$.
Let $\mathsf{K}$ contain the $k$ largest-magnitude coordinates of $\bm{v}_1$, and let $\mathsf{S}$ contain the $m$ coordinates retained in $\bm{y}$.
The ordering of the retained coordinates gives
\begin{equation*}
    \lVert\bm{y}(\mathsf{K}\setminus\mathsf{S})\rVert^2 \le \frac{|\mathsf{K}\setminus\mathsf{S}|}{|\mathsf{S}\setminus\mathsf{K}|} \lVert\bm{y}(\mathsf{S}\setminus\mathsf{K})\rVert^2 \le \frac{k}{m} \lVert\bm{y}(\mathsf{S}\setminus\mathsf{K})\rVert^2.
\end{equation*}
For the second inequality, we used
\begin{equation*}
    \frac{|\mathsf{K}\setminus\mathsf{S}|}{|\mathsf{S}\setminus\mathsf{K}|} = \frac{k-|\mathsf{K}\cap\mathsf{S}|}{m-|\mathsf{K}\cap\mathsf{S}|} \le \frac{k}{m}.
\end{equation*}
Next, choose the phase of $\bm{v}_1$ so that $\bm{v}_1^*\bm{y}=|\bm{v}_1^*\bm{y}|$.
Then
\begin{equation*}
    \lVert\bm{y}-|\bm{v}_1^*\bm{y}|\bm{v}_1\rVert = \lVert\bm{v}_1-|\bm{v}_1^*\bm{y}|\bm{y}\rVert = \sqrt{1-|\bm{v}_1^*\bm{y}|^2}.
\end{equation*}
Since $\mathsf{S}\setminus\mathsf{K}\subseteq\mathsf{K}^c$, the triangle inequality gives
\begin{equation*}
    \lVert\bm{y}(\mathsf{S}\setminus\mathsf{K})\rVert \le |\bm{v}_1^*\bm{y}| \lVert\bm{v}_1(\mathsf{S}\setminus\mathsf{K})\rVert + \lVert\bm{y}-|\bm{v}_1^*\bm{y}|\bm{v}_1\rVert \le |\bm{v}_1^*\bm{y}| \lVert\bm{v}_1(\mathsf{K}^c)\rVert + \sqrt{1-|\bm{v}_1^*\bm{y}|^2}.
\end{equation*}
For compact notation, write
\begin{equation*}
    \Delta_k = \min_{\operatorname{nnz}(\bm{w})\le k} \lVert\bm{v}_1-\bm{w}\rVert = \lVert\bm{v}_1(\mathsf{K}^c)\rVert \quad \text{and} \quad \eta = \sqrt{\frac{k}{m}}.
\end{equation*}
Combining the preceding bounds gives
\begin{equation}
    \label{eq:simple_bound}
    \lVert\bm{y}(\mathsf{K}\setminus\mathsf{S})\rVert \le \eta \left( \sqrt{1-|\bm{v}_1^*\bm{y}|^2} + |\bm{v}_1^*\bm{y}|\Delta_k \right).
\end{equation}

Now introduce $\bm{w}=\bm{y}-\bm{\psi}_m(\bm{y})$, so $\bm{w}$ equals $\bm{y}$ on $\mathsf{S}^c$ and equals zero elsewhere.
Since $\bm{w}$ and $\bm{\psi}_m(\bm{y})$ have disjoint supports,
\begin{align*}
    1 - \frac{|\bm{v}_1^*\bm{\psi}_m(\bm{y})|^2}{\lVert\bm{\psi}_m(\bm{y})\rVert^2} &= \min_{\gamma\in\mathbb{C}} \lVert\bm{v}_1-\gamma\bm{\psi}_m(\bm{y})\rVert^2 \\
    &\le \lVert\bm{v}_1-|\bm{v}_1^*\bm{y}|\bm{\psi}_m(\bm{y})\rVert^2 \\
    &= \left\lVert \bm{v}_1 - |\bm{v}_1^*\bm{y}|\bm{y} + |\bm{v}_1^*\bm{y}|\bm{w} \right\rVert^2 \\
    &= 1-|\bm{v}_1^*\bm{y}|^2 + 2|\bm{v}_1^*\bm{y}| \operatorname{Re}\left[\bm{v}_1^*\bm{w}\right] - |\bm{v}_1^*\bm{y}|^2\lVert\bm{w}\rVert^2.
\end{align*}
We bound the last two terms separately on $\mathsf{K}^c$ and $\mathsf{K}$.
On $\mathsf{K}^c$, we use
\begin{align*}
    0 &\le \left\lVert \bm{v}_1(\mathsf{K}^c) -|\bm{v}_1^*\bm{y}|\bm{w}(\mathsf{K}^c) \right\rVert^2 \\
    &= \lVert\bm{v}_1(\mathsf{K}^c)\rVert^2 - 2|\bm{v}_1^*\bm{y}| \operatorname{Re}\left[ \bm{v}_1(\mathsf{K}^c)^* \bm{w}(\mathsf{K}^c) \right] + |\bm{v}_1^*\bm{y}|^2 \lVert\bm{w}(\mathsf{K}^c)\rVert^2.
\end{align*}
Since $\lVert\bm{v}_1(\mathsf{K}^c)\rVert=\Delta_k$, rearranging gives
\begin{equation*}
    2|\bm{v}_1^*\bm{y}| \operatorname{Re}\left[ \bm{v}_1(\mathsf{K}^c)^* \bm{w}(\mathsf{K}^c) \right] - |\bm{v}_1^*\bm{y}|^2 \lVert\bm{w}(\mathsf{K}^c)\rVert^2 \le \Delta_k^2.
\end{equation*}
On $\mathsf{K}$, the vector $\bm{w}$ equals $\bm{y}$ on $\mathsf{K}\setminus\mathsf{S}$ and vanishes elsewhere.
Thus,
\begin{align*}
    & 2|\bm{v}_1^*\bm{y}| \operatorname{Re}\left[ \bm{v}_1(\mathsf{K})^* \bm{w}(\mathsf{K}) \right] - |\bm{v}_1^*\bm{y}|^2 \lVert\bm{w}(\mathsf{K})\rVert^2 \\
    &\quad= 2|\bm{v}_1^*\bm{y}| \operatorname{Re}\left[ \bm{v}_1(\mathsf{K}\setminus\mathsf{S})^* \bm{y}(\mathsf{K}\setminus\mathsf{S}) \right] - |\bm{v}_1^*\bm{y}|^2 \lVert\bm{y}(\mathsf{K}\setminus\mathsf{S})\rVert^2 \\
    &\quad= 2|\bm{v}_1^*\bm{y}| \operatorname{Re}\left[ \left( \bm{v}_1(\mathsf{K}\setminus\mathsf{S}) - |\bm{v}_1^*\bm{y}| \bm{y}(\mathsf{K}\setminus\mathsf{S}) \right)^* \bm{y}(\mathsf{K}\setminus\mathsf{S}) \right] + |\bm{v}_1^*\bm{y}|^2 \lVert\bm{y}(\mathsf{K}\setminus\mathsf{S})\rVert^2 \\
    &\quad\le 2|\bm{v}_1^*\bm{y}| \sqrt{1-|\bm{v}_1^*\bm{y}|^2} \lVert\bm{y}(\mathsf{K}\setminus\mathsf{S})\rVert + |\bm{v}_1^*\bm{y}|^2 \lVert\bm{y}(\mathsf{K}\setminus\mathsf{S})\rVert^2.
\end{align*}
The last inequality uses Cauchy--Schwarz and $\lVert\bm{v}_1-|\bm{v}_1^*\bm{y}|\bm{y}\rVert =\sqrt{1-|\bm{v}_1^*\bm{y}|^2}$.
Applying \cref{eq:simple_bound}, we obtain
\begin{align*}
    & 2|\bm{v}_1^*\bm{y}| \sqrt{1-|\bm{v}_1^*\bm{y}|^2} \lVert\bm{y}(\mathsf{K}\setminus\mathsf{S})\rVert + |\bm{v}_1^*\bm{y}|^2 \lVert\bm{y}(\mathsf{K}\setminus\mathsf{S})\rVert^2 \\
    &\quad\le 2\eta|\bm{v}_1^*\bm{y}| \left(1-|\bm{v}_1^*\bm{y}|^2\right) + 2\eta|\bm{v}_1^*\bm{y}|^2 \sqrt{1-|\bm{v}_1^*\bm{y}|^2} \Delta_k \\
    &\qquad\quad + \eta^2|\bm{v}_1^*\bm{y}|^2 \left( \sqrt{1-|\bm{v}_1^*\bm{y}|^2} + |\bm{v}_1^*\bm{y}|\Delta_k \right)^2.
\end{align*}
To bound the cross terms, we use
\begin{equation*}
    2\eta|\bm{v}_1^*\bm{y}|^2 \sqrt{1-|\bm{v}_1^*\bm{y}|^2} \Delta_k \le 3\eta^2|\bm{v}_1^*\bm{y}|^2 \left(1-|\bm{v}_1^*\bm{y}|^2\right) + \frac{1}{3}|\bm{v}_1^*\bm{y}|^2\Delta_k^2
\end{equation*}
due to $2ab = 2(\sqrt{3} a)(b/\sqrt{3}) \le 3a^2+b^2/3$, and
\begin{equation*}
    \left( \sqrt{1-|\bm{v}_1^*\bm{y}|^2} + |\bm{v}_1^*\bm{y}|\Delta_k \right)^2 \le 2\left(1-|\bm{v}_1^*\bm{y}|^2\right) + 2|\bm{v}_1^*\bm{y}|^2\Delta_k^2.
\end{equation*}
Combining these estimates gives
\begin{align*}
    1 - \frac{|\bm{v}_1^*\bm{\psi}_m(\bm{y})|^2}{\lVert\bm{\psi}_m(\bm{y})\rVert^2} &\le \left( 1 + 2\eta|\bm{v}_1^*\bm{y}| + 5\eta^2|\bm{v}_1^*\bm{y}|^2 \right) \left(1-|\bm{v}_1^*\bm{y}|^2\right) + \left( 1 + \frac{1}{3}|\bm{v}_1^*\bm{y}|^2 + 2\eta^2|\bm{v}_1^*\bm{y}|^4 \right) \Delta_k^2 \\
    &\leq \left( 1 + 2\eta|\bm{v}_1^*\bm{y}| + 5\eta^2|\bm{v}_1^*\bm{y}|^2 \right) \left(1-|\bm{v}_1^*\bm{y}|^2\right) + \frac{5}{3} \Delta_k^2,
\end{align*}
where we used $|\bm{v}_1^*\bm{y}|\le1$ and $\eta^2\le\frac{1}{6}$, as guaranteed by the oversampling assumption.

We have established
\begin{equation}
    \label{eq:normalized_truncation}
    \frac{|\bm{v}_1^*\bm{\psi}_m(\bm{y})|^2}{\lVert\bm{\psi}_m(\bm{y})\rVert^2} \ge |\bm{v}_1^*\bm{y}|^2 - \left( 2\eta|\bm{v}_1^*\bm{y}| + 5\eta^2|\bm{v}_1^*\bm{y}|^2 \right) \left(1-|\bm{v}_1^*\bm{y}|^2\right) - \frac{5}{3}\Delta_k^2
\end{equation}
assuming $m < n$.
Additionally, \cref{eq:normalized_truncation} holds when $m\ge n$, because then there is no truncation.

\paragraph{Step 2. Progress from one iteration.}
We consider an iterate $\bm{x}_t$ satisfying
\begin{equation}
    \label{eq:spm_overlap_region}
    |\bm{v}_1^*\bm{x}_t|^2 \ge \min\left\{ |\bm{v}_1^*\bm{x}_0|^2, 1-\frac{\varepsilon}{2} \right\}
\end{equation}
and we will assess the progress from one iteration.

Let $\bm{y}_t$ be the normalized matrix-vector product, $\bm{y}_t = \bm{A} \bm{x}_t / \lVert \bm{A} \bm{x}_t \rVert$ and write
\begin{equation*}
    \lVert\bm{A}\bm{x}_t\rVert^2 = |\bm{v}_1^*\bm{x}_t|^2 + \sigma^2 \left(1-|\bm{v}_1^*\bm{x}_t|^2\right).
\end{equation*}
Here $\sigma$ is the norm amplification of the component orthogonal to $\bm{v}_1$; we take $\sigma=0$ when $|\bm{v}_1^*\bm{x}_t|=1$.
Expansion in an eigenbasis gives $0\le\sigma\le|\lambda_2|$.
Consequently, $\sigma \le \lVert\bm{A}\bm{x}_t\rVert \le1$, and
\begin{equation*}
    |\bm{v}_1^*\bm{y}_t|^2 = \frac{|\bm{v}_1^*\bm{x}_t|^2}{\lVert \bm{A} \bm{x}_t \rVert^2} \quad \text{and} \quad 1-|\bm{v}_1^*\bm{y}_t|^2 = \frac{\sigma^2 \left(1-|\bm{v}_1^*\bm{x}_t|^2\right)}{\lVert \bm{A} \bm{x}_t \rVert^2}.
\end{equation*}
Applying \cref{eq:normalized_truncation} to $\bm{y}_t$ and subtracting the current squared overlap gives
\begin{align*}
    & |\bm{v}_1^*\bm{x}_{t+1}|^2 - |\bm{v}_1^*\bm{x}_t|^2 + \frac{5}{3}\Delta_k^2 \\
    & \quad \geq |\bm{v}_1^*\bm{y}_t|^2 - |\bm{v}_1^*\bm{x}_t|^2 - \left(2 \eta |\bm{v}_1^* \bm{y}_t| + 5 \eta^2 |\bm{v}_1^* \bm{y}_t|^2\right) \left(1 - |\bm{v}_1^* \bm{y}_t|^2\right) \\
    & \quad = \frac{|\bm{v}_1^*\bm{x}_t|^2(1 - |\bm{v}_1^* \bm{x}_t|^2)(1-\sigma^2)}{\lVert \bm{A} \bm{x}_t \rVert^2} - \left(2 \eta |\bm{v}_1^* \bm{y}_t| + 5 \eta^2 |\bm{v}_1^* \bm{y}_t|^2\right) \frac{\sigma^2 \left(1-|\bm{v}_1^*\bm{x}_t|^2\right)}{\lVert \bm{A} \bm{x}_t \rVert^2} \\
    &\quad = \frac{ |\bm{v}_1^*\bm{x}_t|^2 \left(1-|\bm{v}_1^*\bm{x}_t|^2\right) }{\lVert \bm{A} \bm{x}_t \rVert^2} \left( 1-\sigma^2 - \frac{2\eta\sigma^2}{|\bm{v}_1^*\bm{x}_t| \lVert \bm{A} \bm{x}_t \rVert} - \frac{5\eta^2\sigma^2}{\lVert \bm{A} \bm{x}_t \rVert^2} \right).
\end{align*}
The assumption \cref{eq:spm_overlap_region} guarantees that $|\bm{v}_1^*\bm{x}_t|^2 \geq |\bm{v}_1^*\bm{x}_0|^2/2$, and the oversampling assumption then implies
\begin{equation*}
    \eta^2 \le \frac{ (1-|\lambda_2|)^2 |\bm{v}_1^*\bm{x}_0|^2 }{10} \le \frac{ (1-|\lambda_2|)^2 |\bm{v}_1^*\bm{x}_t|^2 }{5}.
\end{equation*}
In particular, we can derive the inequalities
\begin{equation*}
    2\eta \le (1-|\lambda_2|) |\bm{v}_1^*\bm{x}_t| \quad \text{and} \quad 5\eta^2 \le (1-|\lambda_2|)^2 |\bm{v}_1^*\bm{x}_t|^2.
\end{equation*}
Combining these estimates and using $1-\sigma \geq 1-|\lambda_2|$ gives
\begin{align*}
    &|\bm{v}_1^*\bm{x}_{t+1}|^2 - |\bm{v}_1^*\bm{x}_t|^2 + \frac{5}{3}\Delta_k^2 \\
    &\quad\geq \bigl(1-|\lambda_2|\bigr)\frac{ |\bm{v}_1^*\bm{x}_t|^2 \left(1-|\bm{v}_1^*\bm{x}_t|^2\right) }{\lVert \bm{A} \bm{x}_t \rVert^2} \\
    &\qquad\quad\times \left( 1 + \sigma - \frac{\sigma^2}{\lVert \bm{A} \bm{x}_t \rVert} - \frac{(1-\sigma) \sigma^2 |\bm{v}_1^* \bm{x}_t|^2}{\lVert \bm{A} \bm{x}_t \rVert^2} \right).
\end{align*}
To simplify this expression, first suppose $\sigma>0$.
We observe
\begin{equation*}
    \frac{(1-\sigma^2)|\bm{v}_1^*\bm{x}_t|^2}{\lVert \bm{A} \bm{x}_t \rVert^2} = 1-\frac{\sigma^2}{\lVert \bm{A} \bm{x}_t \rVert^2} \le 2\left( 1-\frac{\sigma}{\lVert \bm{A} \bm{x}_t \rVert} \right) \le \frac{1 + \sigma}{\sigma} \left(1-\frac{\sigma}{\lVert \bm{A} \bm{x}_t \rVert}\right).
\end{equation*}
The first inequality uses $1-t^2\le2(1-t)$ for $0\le t\le1$.
The second uses $2\sigma\le1+\sigma$.
Multiplying both sides by $\sigma^2/(1+\sigma)$ yields
\begin{equation*}
    \frac{(1-\sigma) \sigma^2|\bm{v}_1^*\bm{x}_t|^2}{\lVert \bm{A} \bm{x}_t \rVert^2} \le \sigma - \frac{\sigma^2}{\lVert \bm{A} \bm{x}_t \rVert}.
\end{equation*}
When $\sigma=0$, this bound holds because both sides are zero.
Thus, we arrive at
\begin{align*}
    |\bm{v}_1^*\bm{x}_{t+1}|^2 - |\bm{v}_1^*\bm{x}_t|^2 + \frac{5}{3}\Delta_k^2 &\geq \bigl(1-|\lambda_2|\bigr)\frac{ |\bm{v}_1^*\bm{x}_t|^2 \left(1-|\bm{v}_1^*\bm{x}_t|^2\right) }{\lVert \bm{A} \bm{x}_t \rVert^2} \\
    &\geq \bigl(1-|\lambda_2|\bigr) |\bm{v}_1^*\bm{x}_t|^2 \left(1-|\bm{v}_1^*\bm{x}_t|^2\right).
\end{align*}
Finally, the compressibility assumption gives
\begin{equation*}
    \frac{5}{3}\Delta_k^2 \le \frac{(1-|\lambda_2|)\varepsilon}{6}.
\end{equation*}
We have therefore proved the recurrence
\begin{equation}
    \label{eq:spm_overlap_recursion}
    |\bm{v}_1^*\bm{x}_{t+1}|^2 \ge |\bm{v}_1^*\bm{x}_t|^2 + (1-|\lambda_2|) \left( |\bm{v}_1^*\bm{x}_t|^2 \left(1-|\bm{v}_1^*\bm{x}_t|^2\right) - \frac{\varepsilon}{6} \right),
\end{equation}
valid whenever \cref{eq:spm_overlap_region} holds.

\paragraph{Step 3. Reaching and preserving the target.}
Now we put the one-step recursion into action.

Suppose first that
\begin{equation*}
    |\bm{v}_1^*\bm{x}_0|^2 \le |\bm{v}_1^*\bm{x}_t|^2 < 1-\frac{\varepsilon}{2}
\end{equation*}
and recall that $\varepsilon < |\bm{v}_1^*\bm{x}_0|^2 \leq |\bm{v}_1^*\bm{x}_t|^2$.
In the case $|\bm{v}_1^*\bm{x}_t|^2 \le 2/3$, we see that the squared overlap is larger than $\varepsilon$, and its complement is at least $1/3$, so
\begin{equation*}
    |\bm{v}_1^*\bm{x}_t|^2 \left(1-|\bm{v}_1^*\bm{x}_t|^2\right) \geq \frac{\varepsilon}{3}.
\end{equation*}
Otherwise, $|\bm{v}_1^*\bm{x}_t|^2 > 2/3$ and the complement is larger than $\varepsilon/2$, which gives the same bound.
Substituting this estimate into \cref{eq:spm_overlap_recursion} gives
\begin{equation}
    \label{eq:spm_overlap_growth}
    |\bm{v}_1^*\bm{x}_{t+1}|^2 \ge |\bm{v}_1^*\bm{x}_t|^2 + \frac{1-|\lambda_2|}{2} |\bm{v}_1^*\bm{x}_t|^2 \left(1-|\bm{v}_1^*\bm{x}_t|^2\right).
\end{equation}
Thus, until the target is reached, the squared overlap increases and remains at least its initial value.

Next suppose $|\bm{v}_1^*\bm{x}_t|^2\ge1-\varepsilon/2$.
As a function of the current squared overlap, the right-hand side of \cref{eq:spm_overlap_recursion} has derivative at least $|\lambda_2|\ge0$ on $[0,1]$.
To verify that the target is preserved, it therefore suffices to check the boundary $|\bm{v}_1^*\bm{x}_t|^2=1-\varepsilon/2$.
At this boundary, the increment in the recurrence is
\begin{equation*}
    (1-|\lambda_2|) \left( \frac{\varepsilon}{2} - \frac{\varepsilon^2}{4} - \frac{\varepsilon}{6} \right) = (1-|\lambda_2|) \frac{\varepsilon(4-3\varepsilon)}{12} > 0,
\end{equation*}
because $\varepsilon<1$.
We verify that $|\bm{v}_1^* \bm{x}_{t+1}|^2 \geq 1 - \varepsilon/2$ so the target is preserved.

\paragraph{Step 4. Iteration count.}
Now suppose the number of iterations is
\begin{equation*}
    q \geq \frac{5}{2(1 - |\lambda_2|)} \log\left(\frac{2}{\varepsilon |\bm{v}_1^* \bm{x}_0|^2}\right).
\end{equation*}
If the target is reached by iteration $q$, Step 3 shows that it is preserved.
Otherwise, the squared overlap remains below $1-\varepsilon/2$ through iteration $q$, so we may apply \cref{eq:spm_overlap_growth} at each iteration.
Taking ratios gives
\begin{equation*}
    \frac{ |\bm{v}_1^*\bm{x}_{t+1}|^2 }{ 1-|\bm{v}_1^*\bm{x}_{t+1}|^2 } \ge \frac{ 1 + \frac{1-|\lambda_2|}{2} \left(1-|\bm{v}_1^*\bm{x}_t|^2\right) }{ 1 - \frac{1-|\lambda_2|}{2} |\bm{v}_1^*\bm{x}_t|^2 } \frac{ |\bm{v}_1^*\bm{x}_t|^2 }{ 1-|\bm{v}_1^*\bm{x}_t|^2 } \ge \left( 1+\frac{1-|\lambda_2|}{2} \right) \frac{ |\bm{v}_1^*\bm{x}_t|^2 }{ 1-|\bm{v}_1^*\bm{x}_t|^2 }.
\end{equation*}
Thus the ratio of squared overlap to squared error grows by a fixed factor at each iteration.
Using the inequality $\log(1+s)\geq 2s/(2+s)$ for $s\ge0$, we find that
\begin{equation*}
    \log\left( 1+\frac{1-|\lambda_2|}{2} \right) \ge \frac{2(1-|\lambda_2|)}{5-|\lambda_2|} \ge \frac{2}{5}(1-|\lambda_2|).
\end{equation*}
Consequently,
\begin{equation*}
    \left( 1+\frac{1-|\lambda_2|}{2} \right)^q \geq \exp\left(\frac{2q}{5}(1-|\lambda_2|)\right) \geq \frac{2}{\varepsilon |\bm{v}_1^* \bm{x}_0|^2}.
\end{equation*}
After $q$ iterations, the squared error satisfies
\begin{equation*}
    \frac{ 1 }{ 1-|\bm{v}_1^*\bm{x}_q|^2 } \geq \frac{ |\bm{v}_1^*\bm{x}_q|^2 }{ 1-|\bm{v}_1^*\bm{x}_q|^2 } \ge \left( 1+\frac{1-|\lambda_2|}{2} \right)^q \frac{ |\bm{v}_1^*\bm{x}_0|^2 }{ 1-|\bm{v}_1^*\bm{x}_0|^2 } \ge \frac{2}{\varepsilon}.
\end{equation*}
This contradicts the assumption that the target has not been reached.
Therefore, the target is reached by iteration $q$ and preserved thereafter.
We confirm that
\begin{equation*}
    |\bm{v}_1^*\bm{x}_q|^2 \ge 1-\frac{\varepsilon}{2}.
\end{equation*}
Expansion in an orthonormal eigenbasis then gives
\begin{equation*}
    1-\bm{x}_q^*\bm{A}\bm{x}_q = \sum_{j=2}^n (1-\lambda_j) |\bm{v}_j^*\bm{x}_q|^2 \leq 2\left(1-|\bm{v}_1^*\bm{x}_q|^2\right) \le \varepsilon.
\end{equation*}
This completes the proof.
\end{proof}

\section{Analysis of sparsified submatrix selection}
\label{sec:analysis_sparsified_submatrix}

Here we prove \cref{prop:greedy_negative_thm}.
The worst-case construction is a symmetric block diagonal matrix with a $3 \times 3$ tridiagonal block followed by a $1 \times 1$ block.
The spectral norms of the blocks are $1$ and $1-r$, respectively, yet the sparsified submatrix selection method mistakenly chooses the second block for its support.
This happens because the initialization vector has nonzero entries at coordinates 1 and 4, the first iteration leads to an active set containing indices 1, 2, and 4, and the spectral norm of the leading $2 \times 2$ matrix is $1-q$, where $q$ is slightly larger than $r$.
The exact construction depends on $q$ in a slightly complicated way, which comes from making $|\bm{v}_1(1)|^2$ as large as possible subject to the spectral constraints.

\begin{proof}[Proof of \cref{prop:greedy_negative_thm}]

The proof proceeds in three steps.

\emph{Step 1.
Build the starting vector and target matrix.} Choose $q\in(r,1)$ sufficiently close to $r$ so that
\begin{equation*}
    \delta < \frac{2(1-q)^3}{(1+q)^2(2-q)} < \frac{2(1-r)^3}{(1+r)^2(2-r)}
\end{equation*}
and define a starting vector
\begin{equation*}
    \bm{x}_0 = \begin{bmatrix}
    \alpha^{1/2} \\
    0 \\
    0 \\
    (1-\alpha)^{1/2}
    \end{bmatrix}, \qquad \alpha = \frac{(1+q)^2(2-q)}{2(1-q)^3} \delta.
\end{equation*}
Thus, $0 < \alpha < 1$ and $\bm{x}_0$ is supported on coordinates 1 and 4.
Introduce a rotation
\begin{equation*}
    \bm{R} = \begin{bmatrix}
    c & -s \\
    s & c
    \end{bmatrix}, \qquad c = \sqrt{\frac{3-q}{3+3q}}, \qquad s = \sqrt{\frac{4q}{3+3q}}
\end{equation*}
and define a $2 \times 2$ matrix,
\begin{equation*}
    \bm{C} = \begin{bmatrix}
    \bm{C}(1, 1) & \bm{C}(1, 2) \\
    \bm{C}(2, 1) & \bm{C}(2, 2)
    \end{bmatrix} = \bm{R} \begin{bmatrix}
    1-q & 0 \\
    0 & (q-1)/2
    \end{bmatrix} \bm{R}^\top.
\end{equation*}
Last, construct the target matrix
\begin{equation*}
    \bm{A} = \begin{bmatrix}
    \bm{C}(1, 1) & \bm{C}(1, 2) & 0 & 0 \\
    \bm{C}(2, 1) & \bm{C}(2, 2) & d & 0 \\
    0 & d & q & 0 \\
    0 & 0 & 0 & 1-r
    \end{bmatrix}, \qquad d = \sqrt{\frac{1 - q^2}{2}}.
\end{equation*}

\emph{Step 2.
Check the spectral properties.} We carefully check the spectral properties.
By rotating the upper left $3 \times 3$ block, we expose a matrix
\begin{equation*}
    \bm{H} = \begin{bmatrix}
    c & s & 0 \\
    -s & c & 0 \\
    0 & 0 & 1
    \end{bmatrix} \begin{bmatrix}
    \bm{C}(1, 1) & \bm{C}(1, 2) & 0 \\
    \bm{C}(2, 1) & \bm{C}(2, 2) & d \\
    0 & d & q
    \end{bmatrix} \begin{bmatrix}
    c & -s & 0 \\
    s & c & 0 \\
    0 & 0 & 1
    \end{bmatrix} = \begin{bmatrix}
    1-q & 0 & ds \\
    0 & (q-1)/2 & dc \\
    ds & dc & q
    \end{bmatrix}.
\end{equation*}
A direct calculation with determinants shows that
\begin{equation*}
    \det(z \mathbf{I} - \bm{H}) = (z-1)(z-(1-q)/2)(z-(q-1)),
\end{equation*}
so the eigenvalues are $1, (1-q)/2, q-1$.
The remaining eigenvalue of $\bm{A}$ is $1-r$.
Since $r<q<1$, the two largest-magnitude eigenvalues of $\bm{A}$ are therefore $1$ and $1-r$.
Another calculation shows the leading eigenvector of $\bm{H}$ is
\begin{equation*}
    \frac{1}{\sqrt{3(1+q)(2-q)}} \begin{bmatrix}
    \sqrt{2(1-q)(3-q)} \\
    \sqrt{2q(1-q)} \\
    \sqrt{3q(3-q)}
    \end{bmatrix}.
\end{equation*}
Last, rotating back to the coordinate frame of $\bm{A}$, the first coordinate of the leading eigenvector is
\begin{equation*}
    \bm{v}_1(1) = \frac{1}{\sqrt{3(1+q)(2-q)}} \begin{bmatrix}
    c & -s
    \end{bmatrix} \begin{bmatrix}
    \sqrt{2(1-q)(3-q)} \\
    \sqrt{2q(1-q)}
    \end{bmatrix} = \frac{\sqrt{2(1-q)^3}}{\sqrt{(1+q)^2(2-q)}}.
\end{equation*}
This confirms that the initial squared overlap is $|\bm{v}_1^*\bm{x}_0|^2 = \delta$, as desired.
The second and third coordinates are also positive, while the fourth coordinate is zero.
Thus, $\operatorname{supp}(\bm{v}_1)=\{1,2,3\}$.

\emph{Step 3.
Analyze the algorithm.} Since $\bm{C}(1,2)>0$ and the starting vector is supported on coordinates 1 and 4, the first active set is $\mathsf{S}_1=\{1,2,4\}$.
The selected principal submatrix consists of $\bm{C}$, whose spectral norm is $1-q$, and the scalar block $1-r$.
Since $1-r>1-q$, the first eigenvector estimate is $\bm{x}_1=\pm\bm{e}_4$.
Column 4 has no off-diagonal nonzeros, so every subsequent active set is $\{4\}$.
Therefore, $\bm{x}_t=\pm\bm{e}_4$ for all $t\ge1$, and every subsequent iterate has zero overlap with $\bm{v}_1$.
This completes the proof.
\end{proof}

\section{Conclusions and outlook} \label{sec:outlook}

We considered whether it is possible to estimate a leading eigenpair of a sparse $n \times n$ Hermitian matrix in time that is independent of $n$ and made two main contributions.
First, we showed that the truncated power method retains a convergence rate comparable to the standard power method when the retained sparsity is chosen according to the target accuracy, the initial overlap, the relative spectral gap, and the compressibility of the leading eigenvector.
Second, for a sparsified submatrix selection method inspired by quantum chemistry, we issued a warning: when the initial squared overlap lies below an explicit threshold determined by the relative spectral gap, there are worst-case instances in which the method loses all overlap with the leading eigenvector after a single iteration and never recovers.

Taken as a whole, this paper identifies structural factors that govern whether dimension-independent estimation of eigenvalues and eigenvectors is possible.
Two factors---the initial overlap and the spectral gap---are already familiar from classical convergence analyses of the power and Lanczos methods \cite{KuczynskiWozniakowski:1992,Saad:2011}.
The third factor, compressibility of the leading eigenvector, is a more novel feature of our analysis.
Yet it is very natural: any method that approximates a high-dimensional vector by a sparse vector can succeed only when the target is compressible at the retained sparsity level.
Here we quantify compressibility in terms of the best $k$-term $\ell^2$ approximation error,
\begin{equation*}
    \Delta_k = \min_{\operatorname{nnz}(\bm{x}) \leq k} \lVert \bm{v}_1 - \bm{x} \rVert_2.
\end{equation*}
We expect that analogous tail quantities will play a key role in future analyses of dimension-independent linear algebra algorithms.

Our chemistry experiments show that submatrix selection methods reach a given accuracy in fewer outer iterations than the truncated power method.
Yet understanding this practical success remains an open problem.
Another open problem is whether randomized sparsification can improve the worst-case guarantees of deterministic truncation for the Hermitian eigenvalue problem.

In the broader context, scientific computing and machine learning now lead to a variety of linear algebra problems at extreme scales.
Our work has mainly focused on electronic structure theory problems, in which the many-electron basis size increases exponentially with the number of electrons \cite{szabo2012modern}.
More generally, optimization problems can have combinatorially many variables \cite{LubbeckeDesrosiers:2005}, while tensor-product discretizations of high-dimensional PDEs produce exponentially large state spaces \cite{BungartzGriebel:2004}.
Large-scale linear algebra also arises in the training and analysis of neural networks \cite{MartensGrosse:2015,GhorbaniKrishnanXiao:2019}.

Much of numerical linear algebra assumes that at least one dense vector of length $n$ can be stored, and great progress has been made within this regime.
For problems with exponential scaling, however, memory is a bottleneck and even this assumption fails.
Less attention has been devoted to the extreme-scale regime, but it cannot be avoided forever.
Our work begins to provide algorithms and useful guarantees for this less developed context.

\begin{ack}
The authors acknowledge helpful conversations with Jackie Lok and Samuel Greene.

During the preparation of this work, the authors used ChatGPT to improve the writing and mathematical results, and Claude to improve the code.
The main theorems were developed before AI tools were used, although AI-assisted revisions improved their constants or rates.
The authors independently verified all proofs and computational results and take full responsibility for the published work.

This work was supported in part through the NYU IT High Performance Computing resources, services, and staff expertise.
\end{ack}

\begin{funding}
RJW was supported by a 2026--2027 Hellman Fellowship. CM was supported by the NSF AF program under award $\#2427363$. NH and JW were supported by the NSF CAIG program under award $\#2425899$.
\end{funding}

\bibliographystyle{emss-links}
\bibliography{ref}

\end{document}